\newtheorem{theorem}{Theorem}[section]
\newtheorem{lemma}[theorem]{Lemma}
\newtheorem{proposition}[theorem]{Proposition}
\newtheorem{corollary}[theorem]{Corollary}
\theoremstyle{definition}
\newtheorem{definition}[theorem]{Definition}
\theoremstyle{remark}
\newtheorem{remark}[theorem]{Remark}
\numberwithin{equation}{section}
\begin{document}
\setcounter{page}{1}
\setcounter{figure}{0}

\centerline{}

\centerline{}

\title[Mixing Flows on Finite Area Translation Surfaces]{On Mixing Flows on Finite Area Translation Surfaces}

\author[Erick Gordillo]{Erick Gordillo}

\address{$^{1}$ Department of Mathematics, University of Heidelberg}
\email{\textcolor[rgb]{0.00,0.00,0.84}{egordillo@mathi.uni-heidelberg.de}}

\begin{abstract}

We construct an explicit family of finite-area, infinite-genus translation surfaces whose vertical translation flow is strongly mixing. This provides a positive answer to a question posed by Lindsey and Treviño~\cite{LT}..
\end{abstract} \maketitle
\section{Introduction}

Translation surfaces are geometric objects in which it is possible to define a notion of geodesics for every direction $\theta\in [0,2\pi)$. The collection of geodesics for a direction $\theta$ yields  a \textit{foliation}\footnote{It is not a foliation in the strict sense since there are geodesics whose domain of definition can not be extended to $(-\infty,\infty)$. Nevertheless, in most of the cases, in particular in the ones that we are going to consider the set  of these \textit{problematic} geodesics has measure  zero.}. In the last decades people have been interested in the distribution of the leaves of this foliation, or in the dynamical properties of the flow (usually called translation flow or billiard flow) that define this foliation, from an ergodic theory point of view. \\

The study of translation flows on surfaces of finite type—that is, surfaces homeomorphic to compact Riemann surfaces—has yielded significant results. In particular, in \cite{KMS}, the authors establish the unique ergodicity of the translation flows, showing that for almost every direction, the leaves of the associated foliation are equidistributed. Furthermore, in \cite{AF}, it is proven that for almost every translation surface in a given stratum, the translation flow is weakly mixing for almost every direction.\\

In recent years, there has been growing interest in understanding the geometry and dynamics of translation surfaces beyond the compact setting. As expected, many properties established for compact translation surfaces no longer hold, while new phenomena emerge. A comprehensive overview of recent developments can be found in \cite{DHV}.  \\

In this work, we study one such property and establish the following result:

\begin{theorem}\label{teointro}
Let $S$ be a finite-area, infinite-genus translation surface constructed from an $L$-shaped polygon satisfying the following conditions:

\begin{enumerate}
    \item The vertical heights $p$ and $q$ of the $L$-shaped polygon are rationally independent, with $p > q$. Let $x_0 \in (0,1)$ denote the point separating the base interval into two parts: $[0,x_0)$, corresponding to the non-spacer region, and $[x_0,1)$, corresponding to the spacer region of a mixing staircase transformation $T$. The roof function over $[0,x_0)$ has height $p$, while over $[x_0,1)$ it has height $q$.
    
    \item The verticall edges are glued by identifying opposite sides.
    
    \item The horizontal edges are glued according to a mixing staircase transformation $T$ defined on the base interval $[0,1)$. Specifically, a point $(x,0)$ is identified with $(T(x), \epsilon(T(x)))$, where $\epsilon(T(x)) \in \{p,q\}$ encodes the height of the corresponding roof.
\end{enumerate}

Then the vertical translation flow $\phi_t$ on $S$ is mixing.
\end{theorem}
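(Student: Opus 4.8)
The plan is to recognize the vertical flow $\phi_t$ as a special (suspension) flow over the staircase transformation and then to prove mixing directly through a joint equidistribution statement in the base and fiber coordinates. Concretely, I would first observe that the $L$-shaped construction exhibits $(S,\phi_t)$ as the special flow over the base map $T$ on $([0,1),\mu)$ under the roof function
$$
f(x) = p\,\mathbf{1}_{[0,x_0)}(x) + q\,\mathbf{1}_{[x_0,1)}(x),
$$
so that the phase space is $X^f = \{(x,s) : 0 \le s < f(x)\}$ with the normalized measure $\tfrac{1}{\bar f}\,\mu \times \mathrm{Leb}$, and $\phi_t(x,s) = (T^{n}x,\, s + t - S_n f(x))$, where $S_n f(x) = \sum_{k=0}^{n-1} f(T^k x)$ and $n = n(x,s,t)$ is determined by $S_n f(x) \le s+t < S_{n+1} f(x)$. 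The one structural identity I would record at this stage is $S_n f(x) = nq + N_n(x)(p-q)$, where $N_n(x)$ counts the visits of $x$ to the non-spacer region $[0,x_0)$ under the first $n$ iterates of $T$; this isolates the two rationally independent frequencies $q$ and $p-q$ that will drive the argument.

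To prove mixing it suffices to check decay of correlations on a generating family of product sets $A = A_0 \times I$ and $B = B_0 \times J$, with $A_0, B_0 \subset [0,1)$ measurable and $I, J \subset (0,q)$ short intervals (valid since $f \ge q$). Because $I$ is short and $t$ is large, the integer $n(x,s,t)$ is essentially independent of $s \in I$; writing $n = n(x,t)$, the overlap $\mu^f(\phi_t A \cap B)$ reduces, up to a controlled error, to
$$
\frac{1}{\bar f}\int_{A_0} \mathbf{1}_{B_0}\!\big(T^{n(x,t)} x\big)\;\mathrm{Leb}\Big(\{\, s \in I : s + t - S_{n(x,t)} f(x) \in J \,\}\Big)\, d\mu(x).
$$
Thus everything comes down to the joint behavior, as $t \to \infty$, of the base coordinate $T^{n(x,t)} x$ and the reduced fiber phase determined by $t - S_{n(x,t)} f(x)$. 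I would establish two facts: first, that the base coordinate equidistributes into $B_0$, which I extract from the mixing of the staircase transformation $T$ together with $n(x,t) \to \infty$; and second, that the fiber phase equidistributes on the fiber scale and does so asymptotically independently of the base coordinate.

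The fiber equidistribution is the heart of the matter, and here the hypotheses are used decisively. Since $S_{n} f(x) = nq + N_n(x)(p-q)$ with $q$ and $p-q$ rationally independent, the phase $t - S_n f(x)$ moves both along an arithmetic progression with step $q$ in $n$ and along a second, incommensurate progression with step $p-q$ governed by the fluctuations of the visit counter $N_n(x)$. I would show these fluctuations are genuinely spread out — quantitatively, that the distribution of $N_{n}(x)$ is sufficiently non-degenerate (an anticoncentration / local-limit-type estimate) — so that, by a Weyl-type argument resting on the irrationality of $(p-q)/q$, the phases $t - S_{n(x,t)} f(x)$ become uniformly distributed modulo the fiber heights. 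It is exactly this rational independence that removes any discrete spectrum (forcing weak mixing) and powers the phase equidistribution. The rank-one cutting-and-stacking description of the staircase map is the natural tool for controlling both $N_n$ and its correlation with the location of $T^{n}x$, since along the Rokhlin towers the roof function and the visit counter are computable level by level.

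The main obstacle I anticipate is precisely the decoupling in the combination step: showing that the base equidistribution of $T^{n(x,t)}x$ and the fiber equidistribution of the phase occur jointly and independently, uniformly in $t$. The difficulty is that $n(x,t)$ itself depends on $x$ through $S_n f(x)$, so the base and fiber coordinates are not manifestly independent; breaking this dependence requires a quantitative form of the mixing of $T$ (or a careful tower-by-tower analysis) strong enough to absorb the variation of the hitting time $n(x,t)$. Once this joint equidistribution is in hand, the displayed integral converges to $\mu^f(A)\,\mu^f(B)$, giving mixing of $\phi_t$.
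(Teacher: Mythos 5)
Your reduction is sound and matches the paper's setup in substance: the vertical flow is the special flow over $T$ under the two-valued roof, the identity $S_n f(x) = nq + N_n(x)(p-q)$ is exactly the right structural fact, and the paper likewise splits the problem into base equidistribution (extracted from the mixing of $T$, as in its Lemma \ref{lemma4.2}) and fiber-phase equidistribution (driven by rational independence), combined over the Rokhlin tower levels. The paper packages the combination through Ulcigrai's criterion (Lemma \ref{LemaUlci}) with partial partitions given by tower levels rather than by testing correlations on product sets, but that difference is largely cosmetic.

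The genuine gap is that the two steps carrying all the weight are asserted rather than proved, and the form in which you assert the first is not what the proof actually needs. An ``anticoncentration / local-limit-type estimate'' for $N_n(x)$ is not an available generic fact for rank-one staircase maps, and mere non-degeneracy of the distribution of $N_n$ would not suffice: you need the specific phases $t - S_{n(x,t)}f(x)$ to equidistribute, and visit counts that are spread out but arithmetically structured (e.g.\ supported on a lacunary or otherwise biased set of integers) could defeat a Weyl argument. What actually makes the result true --- and what the paper proves in Section \ref{SectionClassical} --- is the rigid staircase combinatorics: across a tower level the spacer delays of the subcolumns are \emph{consecutive} integers, so after discretizing by $\phi_q$ the attained heights are literally consecutive points $kq, (k+1)q, \dots \bmod 1$ of the orbit of the irrational rotation by $q$, and over the spacer region they are orbits of return maps of that rotation to $[0,q)$, which are again rotations and hence uniquely ergodic; equidistribution follows from unique ergodicity with no probabilistic input (Proposition \ref{propomiau}). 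Second, the decoupling you correctly flag as ``the main obstacle'' is left unresolved in your plan, and it is precisely where the remaining work lies: the paper resolves it by proving equidistribution of heights not just globally but within every cylinder over a fixed tower level $I^{(n)}$ (the $\overline{H}$-equidistribution of Lemma \ref{H2equidistribution}), via the leading-subinterval bookkeeping, and it is this cylinder-wise statement that allows the base measure estimate and the height distribution to be multiplied in the proof of Theorem \ref{teo4.14}. Without concrete substitutes for these two computations your argument does not close.
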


This result serves as an example highlighting the contrasting properties of finite and infinite translation surfaces. The contrast arises from a key contribution by  Katok in the 1980s \cite{KA}, where he established that suspension flows constructed over interval exchange transformations under functions of bounded variation are never mixing. This consequently implies that translation flows on surfaces of finite type can never be mixing. In 2007, Ulcigrai demonstrated that for almost any interval exchange transformation, suspension flows constructed under functions with certain singularity properties exhibit mixing behavior \cite{Ulcigrai}.\\

Nevertheless, although it is true that these flows are volume preserving, they are not translation flows. \\

Infinite translation surfaces can have infinite area, with interesting examples such as the wind tree model \cite{AH} or $\mathbb{Z}$-coverings \cite{HW}, as well as finite area surfaces, as shown in \cite{Tre1}. In \cite{LT}, Lindsey and Treviño use techniques from cutting and stacking constructions to study the dynamics of translation flows on finite area translation surfaces, encoding their information in bi-infinite Vershik--Brattelli diagrams. One particularly interesting result they present is an extension of Rudolph's classical representation theorem for aperiodic flows \cite{R}. Specifically, they prove in \mbox{\cite[Proposition 7.6, Lemma 7.7]{LT} } that any aperiodic flow on a finite Lebesgue measure space with finite entropy is measure-theoretically isomorphic to the vertical flow of a translation surface of finite area arising from a Vershik--Brattelli diagram.\\

Importantly, this result implies the existence of mixing translation flows. For instance, Anosov flows provide excellent examples of mixing flows that satisfy the conditions outlined in their result. Consequently, for an Anosov flow $\gamma_t$, there exists a translation surface of finite area $S$ whose vertical flow $\phi_t$ is measure-theoretically equivalent to $\gamma_t$; in particular, this vertical translation flow is mixing.\\

In their work, K. Lindsey and R. Treviño do not provide specific examples of such flows. However, they conjecture \cite[Conjecture 7.8]{LT} that suspensions over mixing staircase transformations can yield mixing translation flows. Theorem \ref{teointro} provides a positive answer to this question.\\

\section{Organization}

Our goal is to construct a family of examples of infinite translation surfaces with finite area such that their translation flow is mixing.\\

In Section \ref{preliminaries} we provide  basic definitions of translation surfaces, rank-one transformations and staircase transformations, suspension flows and a way to construct a translation surface from a cutting and stacking process. \\

In Section \ref{SectionClassical}  we prove the result on a suspension flow over the classical staircase. \\

\begin{theorem}\label{teo2.1}
    Let $T$ be the classical staircase transformation and $p,q$ two rationally independent positive numbers. Let $f$ be a 2-wise constant function such that $f$ attains the values $p,q$ on complementary intervals. Then the suspension flow $([0,1],T,f,\mu)$ is  mixing.
    \end{theorem}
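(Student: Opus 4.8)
The plan is to prove mixing for this suspension flow by controlling the stretching of the roof function's Birkhoff sums along orbits of the classical staircase. Let me think carefully about the structure.

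We have a suspension flow over the classical staircase transformation $T$ on $[0,1)$, with roof function $f$ taking values $p$ and $q$ (rationally independent) on complementary intervals. The key obstacle to mixing in suspension flows over IETs (Katok's result) is that when $f$ is of bounded variation, the Birkhoff sums $S_n f(x) = \sum_{k=0}^{n-1} f(T^k x)$ don't stretch differently enough across the fiber — they're essentially coherent, leading to rigidity. To get mixing, we need the Birkhoff sums to have growing "spread" or "shearing" — the classical mechanism (Ulcigrai, Fayad, Forni-Ulcigrai) is that mixing comes from the divergence of Birkhoff sums of the roof function creating shearing of transverse arcs.

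The relevant mechanism for mixing of suspension flows: mixing follows if the partial sums $S_n f$ exhibit enough stretching — specifically, one uses the Hopf-style / Kochergin-type argument that mixing is equivalent to the "shearing" of horizontal segments. For rank-one transformations and staircases, the right tool is to look at the structure of Rokhlin towers. The classical staircase transformation is a rank-one cutting-and-stacking construction: at stage $n$, we cut a tower into $r_n$ columns (with $r_n \to \infty$), add spacers in a "staircase" pattern (the number of spacers added on top of column $j$ grows linearly in $j$), and stack. The mixing of the staircase (a known result, Adams) comes precisely from this linear growth of spacers.

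Here's how I'd structure the proof:

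First, I would recall that for a suspension flow to be mixing it suffices (by a standard approximation / Fourier-analytic criterion) to show that for any two measurable sets $A, B$ in the suspension space, the correlation $\mu(\phi_t A \cap B) \to \mu(A)\mu(B)$. Using the rank-one tower structure, I'd approximate $A$ and $B$ by unions of "boxes" — subsets of the form (base of a column) $\times$ (fiber interval).

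Second, and this is the heart: I'd analyze how the flow moves a box. Moving up by time $t$ means flowing vertically; after crossing the roof, a point $(x,s)$ returns to the base as $(Tx, 0)$ having consumed $f(x)$ units of time. So flowing for a long time $t$ corresponds to applying $T^n$ where $n = n(x,t)$ is determined by $S_n f(x) \le t < S_{n+1}f(x)$. The decorrelation will come from: (a) the mixing of $T$ itself in the base direction (the staircase is mixing), and (b) crucially, the variation of $S_n f$ over a base interval, which determines how the fiber coordinate gets sheared. Because $p \ne q$ and they're rationally independent, as we apply $T^n$ the count of times the orbit visits the $p$-interval versus the $q$-interval varies across the tower columns, so $S_n f(x)$ varies by amounts that, modulo the structure, equidistribute. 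This is the translation-surface analogue of the Ulcigrai shearing: the roof is NOT of bounded variation in the relevant sense because the return-time combinatorics of the rank-one staircase force $S_n f$ to spread out linearly across a Rokhlin tower.

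Third, I would quantify the shearing. Within a single column of the level-$n$ tower of the staircase, consecutive levels differ in their $f$-height, and the staircase's defining feature — the linearly-growing spacer counts — means that the cumulative roof height $S_k f$ as a function of the column index spreads out at a linear rate. I'd show that this linear spreading, combined with rational independence of $p$ and $q$ (which guarantees the relevant phases equidistribute mod the flow direction and prevents the coherent "rigid" return that kills mixing), forces the images $\phi_t(\text{box})$ to equidistribute in the fiber direction as $t \to \infty$. Concretely, I expect to invoke a van der Corput / stationary-phase type estimate on exponential sums $\sum e^{2\pi i \xi S_n f}$ and show they decay, which is the Fourier-dual formulation of mixing.

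The main obstacle will be step three: establishing the quantitative shearing estimate. I need to show that the Birkhoff sums $S_n f$, restricted to a level-$n$ tower of the staircase, spread out over an interval of length growing to infinity (so that the fiber gets "wrapped around" many times and equidistributes), and do so at a controlled, uniform rate. This requires a precise understanding of the staircase's cutting-and-stacking parameters — specifically how the spacer pattern translates into growth of $\mathrm{Var}(S_n f)$ or into the derivative of $S_n f$ with respect to the position in the base — and then combining it with the rational independence of $p$ and $q$ to rule out resonances. I will carry out this estimate by tracking, column by column in the tower, how many $p$-intervals versus $q$-intervals each orbit segment hits, reducing the problem to an equidistribution statement that the staircase structure supplies.

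\begin{proof}[Proof sketch]
\end{proof}
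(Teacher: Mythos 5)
Your proposal correctly identifies the two ingredients that must interact (mixing of the base staircase $T$, and rational independence of $p,q$ forcing fiber equidistribution), but the central mechanism you invoke to combine them --- Kochergin/Ulcigrai-type \emph{shearing} coming from stretching of the Birkhoff sums $S_n f$ --- does not operate in this setting, and the step you yourself flag as the ``main obstacle'' is exactly where it breaks. Here $f$ is piecewise constant with two values, so it \emph{is} of bounded variation (your claim that it is ``not of bounded variation in the relevant sense'' is incorrect; Katok's obstruction is evaded because $T$ is not a finite interval exchange, not because of any singularity of the roof). Consequently $S_n f$ is locally constant on the levels of each staircase tower: the image under $\phi_t$ of a level is a \emph{finite union of horizontal segments} at discrete heights, never a sheared curve that wraps around the fiber. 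There is no derivative of $S_n f$ to make large, no stationary-phase structure, and a van der Corput bound on $\sum e^{2\pi i \xi S_n f}$ has nothing geometric to exploit. What must be proved instead is that the \emph{discrete set} of heights attained by these horizontal segments equidistributes in the fiber; after normalizing $p=1$, these heights lie in $\{nq \bmod 1\}$ because $S_n f \equiv n_q(x)\, q \pmod 1$, where $n_q$ counts visits to the $q$-interval.

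That discrete equidistribution is the real technical heart, and it is absent from your plan. The paper proves it by tracking the staircase combinatorics explicitly: the heights attained by a flowed level are governed by a single ``lowest'' subinterval (the staircase pattern delays heights by one iterate of $\phi_q$ per column); in the non-spacer region each height $nq \bmod 1$ is hit once, but in the spacer region whole blocks of levels share one height, and one must show that the subsequence of heights landing there --- which is the orbit of the first (indeed second) return map of the rotation by $q$ to $[0,q)$, organized by spacer-block sizes along arithmetic progressions --- still equidistributes (the paper's $H$- and $\overline H$-equidistribution results). Separately, one needs a lemma (using Adams' mixing of $T$, plus an argument that for $m$ large the projections of distinct sublevels of $I^{(m)}$ never collide) to handle the horizontal direction, and then a geometric mixing criterion (Ulcigrai's partial-partition lemma) to assemble the two directions into mixing of the flow on all rectangles, including those straddling the spacer/non-spacer boundary. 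Your sentence ``reducing the problem to an equidistribution statement that the staircase structure supplies'' names this step but supplies none of it, so as written the proposal is a restatement of the difficulty rather than a proof.
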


 The motivation to start with this case is due to the fact that the classical staircase $T$ was the first staircase transformation which was proved to be mixing \cite{Adams}.\\

The strategy to establish the mixing property for suspension flows over any mixing staircase transformation follows essentially the same approach as in the classical staircase case. Accordingly, in Section \ref{SectionClassical} we present the full details of the argument. In Section \ref{gen}, we focus only on the modifications required for the generalized case and explain why these adjustments are valid. This allows us to conclude:

\begin{theorem}\label{teo2.2}
    Let $T$ be any mixing staircase transformation and $p,q$ two rationally independent positive numbers. Let $f$ be a 2-wise constant function such that $f$ attains the values $p,q$ on complementary intervals. Then the suspension flow $([0,1],T,f,\mu)$ is  mixing.
    \end{theorem}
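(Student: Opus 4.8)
The plan is to transfer the argument of Theorem~\ref{teo2.1} essentially verbatim, isolating the few places where that proof invoked the explicit combinatorics of the classical staircase and replacing them by estimates that use only the abstract properties shared by every mixing staircase. Recall that the suspension space is $X^f=\{(x,s):0\le s<f(x)\}$ with $\phi_t(x,s)=(x,s+t)$ subject to the identification $(x,f(x))\sim(Tx,0)$, so that $\phi_t(x,s)=(T^{N}x,\,s+t-S_Nf(x))$, where $S_nf=\sum_{j=0}^{n-1}f\circ T^{j}$ and $N=N(x,s,t)$ is determined by $S_Nf(x)\le s+t<S_{N+1}f(x)$. As in the classical case I would first reduce mixing, i.e. $\nu(\phi_t A\cap B)\to \nu(A)\nu(B)$ (after normalising the suspension measure $\nu$ to be a probability measure), to a statement about flow boxes $E\times J$, where $E$ is a level of the Rokhlin tower $\tau_n$ associated to the rank-one (cutting and stacking) structure of $T$ and $J$ is a fibre interval; such boxes generate the $\sigma$-algebra, so it suffices to control $\nu(\phi_t(E\times J)\cap(E'\times J'))$ for all large $t$.

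The heart of the matter is the same dichotomy of mechanisms that drove Theorem~\ref{teo2.1}. Horizontally, the mixing of the base staircase $T$ spreads the tower levels uniformly across $[0,1)$; this input is available by hypothesis for \emph{any} mixing staircase and requires no change. Vertically (in the flow direction) the circle factor that a constant roof would produce is destroyed by the two-valued roof: along a level $E$ the return index $N(x,s,t)$ and the defect $s+t-S_{N}f(x)$ vary with $x$ because $S_nf(x)=q\,n+(p-q)\,a_n(x)$, where $a_n(x)=\#\{0\le j<n:T^jx\in[0,x_0)\}$ counts the visits to the height-$p$ part. Since $p$ and $q$ are rationally independent, a Weyl-type argument rules out concentration of the displacements $(p-q)a_n(x)$ on a single coset of a lattice. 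The conceptual role of rational independence is thus the same as in the classical case; what changes is only the quantitative equidistribution estimate that realises it, to which I return below.

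The actual modifications are confined to the combinatorial estimates controlling how the columns of $\tau_n$ are mapped by $\phi_t$ and how the spacer counts, hence the increments of $S_nf$, are distributed across sub-columns. In the classical staircase the $i$-th sub-column at stage $n$ receives exactly $i$ spacers for $i=0,\dots,r_n-1$, so the relevant counts form an arithmetic progression and every quantitative estimate is explicit. For a general mixing staircase the spacer sequence $s_n(i)$ is arbitrary subject only to the growth conditions on $r_n$ and $s_n(\cdot)$ that guarantee $T$ is mixing (in particular $r_n\to\infty$). I would rewrite each such estimate so that it depends on $s_n(i)$ only through these structural bounds: the uniform control of overlaps between $\phi_t$-images of levels, and the error terms in the approximation of $a_n(x)$ by its mean, must be re-derived from the mixing staircase axioms rather than from the arithmetic formula.

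The step I expect to be the main obstacle is re-establishing, for a general spacer sequence, the uniform equidistribution of the vertical displacements induced on the sub-columns of $\tau_n$. In the suspension, cutting $\tau_n$ into $r_n$ columns and restacking with spacers $s_n(i)$ places the levels of distinct sub-columns at distinct geometric heights, the differences of which are integer combinations of $p$ and $q$ determined by $s_n(\cdot)$ and by the distribution of the two roof values along the tower. For the classical staircase these differences form an arithmetic-progression pattern whose equidistribution modulo the fibre scale is explicit (three-distance / Weyl estimates). For a general mixing staircase the pattern is arbitrary, and one must show that, as $r_n\to\infty$, these displacements still fill the fibre direction uniformly. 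I would derive this from the rational independence of $p$ and $q$ together with the growth of $r_n$, checking that the resulting error terms remain summable along the sequence of tower heights $h_n$. Once this uniform equidistribution is secured, the remaining overlap estimates and the passage to general $A,B$ go through exactly as in the proof of Theorem~\ref{teo2.1}, giving the mixing of $\phi_t$.
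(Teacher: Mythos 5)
Your high-level decomposition (horizontal spreading from mixing of $T$, vertical equidistribution from rational independence of $p,q$) matches the paper's, but there is a genuine gap, and it originates in a mischaracterization of the class of transformations. In this paper a \emph{staircase} transformation --- classical or not --- by definition places exactly $i-1$ spacers over the $i$-th subcolumn at every stage; what is ``general'' in Theorem~\ref{teo2.2} is only the cutting sequence $r_n$ (together with the hypothesis that $T$ is mixing). Your proposal instead assumes that for a general mixing staircase ``the spacer sequence $s_n(i)$ is arbitrary subject only to the growth conditions,'' and the step you yourself flag as the main obstacle --- uniform equidistribution of the vertical displacements for such arbitrary spacer patterns --- is exactly the step you never carry out: you say you ``would derive this from the rational independence of $p$ and $q$ together with the growth of $r_n$, checking that the resulting error terms remain summable,'' which is a plan, not a proof. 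In the generality you set up (arbitrary spacer sequences, i.e.\ general rank-one transformations) this is precisely the hard problem the paper's conclusion explicitly leaves open: the staircase structure is what produces sequences of heights with controlled properties, and no Weyl-type estimate is supplied, or known, that closes the argument without it.

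The paper's own proof closes this step by exploiting the retained staircase structure. Discretizing the flow by $\phi_q$, the heights attained by a level $I^{(m)}$ coincide (by the delay-by-one mechanism of \ref{Behaviour}, which persists for any cutting sequence) with consecutive orbit segments $\{nq \bmod 1 : a_k \le n \le b_k\}$ with $b_k - a_k \to \infty$; on the spacer side the heights are orbits of powers of the first-return map of the rotation by $q$ to $[0,q)$ --- with the $r_1$-fold passage under the roof of height $1$ and the block-counting function $g(n)=\prod_{i=2}^{n} r_i$ replacing the classical-case bookkeeping --- and unique ergodicity of these return maps yields equidistribution (Proposition~\ref{propo5.4}, Lemma~\ref{LemaHstrong}). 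The conclusion then comes from Ulcigrai's criterion (Lemma~\ref{LemaUlci}), which needs only the one-sided bound $\lambda(I\cap\phi_{-t}(R)) \ge (1-\epsilon)\mu(R)\lambda(I)$ on partition elements taken from the tower levels, together with the horizontal statement Lemma~\ref{lemma5.1} (your flow-box reduction aims at the symmetric two-sided mixing identity, which is more than is needed). If you restrict your scheme to actual staircases, your ``main obstacle'' largely disappears --- the displacements are again arithmetic-progression patterns, and the classical-case estimates adapt --- but as written your proposal simultaneously overshoots the theorem and leaves its crux unproved.
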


Note that the graph of the function $f$ from Theorems \ref{teo2.1} and \ref{teo2.2}, together with the $x$-axis and $y$-axis, forms an $L$-shaped polygon. By identifying the bottom edges with the top edges using the staircase transformation $T$ and the vertical edges with their opposite edges, we obtain a translation surface with finite area and infinite genus. Moreover, the vertical translation flow on this surface corresponds to the suspension flows described in both theorems. Therefore, Theorems \ref{teo2.1} and \ref{teo2.2} together yield Theorem \ref{teointro}.\\

We believe that it should be possible to impose some \textit{mild} conditions on the growth of spacers and cutting sequences such that these results could be extended to more general rank-one transformations satisfying these conditions. For example, the restricted growth conditions on the cutting sequences and additional conditions on the spacer sequences, as explained in \cite{CS1}, \cite{CS2}.\\

It is almost immediate from the construction of the suspension flows considered in Theorem \ref{teo2.1} and Theorem \ref{teo2.2} that they can be regarded as \textit{rank-one flows}. In the 1990s, Ryzhikov proved that if rank-one flows are mixing, then they are also $n$-fold mixing for any $n$ \cite{Ryz}. Therefore, we immediately obtain this result in Section \ref{nfold}:

\begin{theorem}\label{teo2.3}
    Let $T$ be  any mixing staircase transformation and $p,q$ two rationally independent positive numbers. Let $f$ be a 2-wise constant function such that $f$ attains the values $p,q$ on complementary intervals. Then the suspension flow $([0,1],T,f,\mu)$ is  mixing of any order.
    \end{theorem}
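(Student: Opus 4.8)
The plan is to invoke Ryzhikov's theorem on higher-order mixing for rank-one flows, so the entire argument reduces to two claims: first, that the suspension flow in question is genuinely a rank-one flow, and second, that it is mixing. The second claim is exactly the content of Theorem~\ref{teo2.2}, which I would assume as already established. Thus the heart of the proof is verifying that the suspension flow $(\,[0,1],T,f,\mu\,)$ admits a rank-one structure as a \emph{flow}, after which the conclusion of $n$-fold mixing for every $n$ follows immediately by citing~\cite{Ryz}.

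First I would recall that the base map $T$ is a mixing staircase transformation, which is by construction a rank-one transformation: it arises from a cutting-and-stacking process producing a sequence of Rokhlin towers whose levels generate the $\sigma$-algebra and whose union fills $[0,1]$ in measure. The next step is to promote this rank-one structure of the base to a rank-one structure of the suspension flow under the roof function $f$. Since $f$ is $2$-wise constant, taking only the values $p$ and $q$ on complementary intervals, the roof is constant on the generating columns of $T$ at every sufficiently fine stage of the construction; concretely, one refines the cutting-and-stacking partition so that each column on which we cut lies entirely inside either the region where $f=p$ or the region where $f=q$. Over such a column the suspended flow space is a genuine geometric box (a flow tower) of uniform height, and the vertical flow $\phi_t$ simply translates upward within these boxes. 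This is precisely the geometric picture of an approximation by a single Rokhlin tower for a flow, which is the definition of a rank-one flow.

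The key technical point to spell out is that these flow towers can be made to exhaust the suspension space and to approximate arbitrary measurable sets, uniformly in the flow direction. I would argue that because the base towers for $T$ fill $[0,1]$ in measure and generate the Borel $\sigma$-algebra, and because $f$ is bounded and $2$-wise constant (hence the suspended measure splits as base measure times Lebesgue in the vertical direction), the induced flow boxes likewise fill the total space $X_f = \{(x,s): 0\le s < f(x)\}$ in measure and generate its $\sigma$-algebra up to the flow. In particular, any measurable subset of $X_f$ and any finite segment of a flow orbit can be approximated by unions of sub-boxes of a single flow tower at a sufficiently deep stage. This establishes that $\phi_t$ is a rank-one flow in the standard sense.

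The main obstacle I anticipate is purely bookkeeping rather than conceptual: one must check that refining the base partition to make $f$ constant on columns does not destroy the rank-one property of $T$, i.e.\ that the refined towers are still a single-column approximation. This is handled by noting that the discontinuity set of $f$ consists of finitely many points (the endpoints separating the $f=p$ and $f=q$ intervals), so at each stage only boundedly many columns are split, and the fraction of the measure affected tends to zero as the construction deepens; the limiting tower structure is therefore still rank one. Once this verification is in place, mixing is supplied by Theorem~\ref{teo2.2} and the rank-one flow property by the above, so Ryzhikov's theorem~\cite{Ryz} yields mixing of all orders, completing the proof of Theorem~\ref{teo2.3}.
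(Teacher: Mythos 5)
Your proposal follows essentially the same route as the paper: establish that the suspension flow is a rank-one flow (the paper states this as an immediate lemma, since the roof is $2$-wise constant and hence constant on every level of the staircase towers, so the suspension over the $n$-th column is already a single flow box), take mixing from Theorem~\ref{teo2.2}, and conclude via Ryzhikov's theorem~\cite{Ryz}. The only difference is that you spend effort on refining columns so that $f$ becomes constant on them, which is unnecessary here because spacer levels lie under $q$ and non-spacer levels under $p$ by construction; otherwise the argument matches the paper's.
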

    
\section{Preliminaries}\label{preliminaries}
\subsection{Translation Surfaces}
There are various approaches to defining a translation surface, each tailored to different interests.\\

One common depiction of a translation surface involves considering a connected topological surface $S$ alongside a discrete subset $ \Sigma \subseteq S$. Within $ S \setminus \Sigma$, a maximal atlas called the \textit{translation atlas} $ \{ \phi_\alpha, U_\alpha \}_{\alpha \in \Lambda}$ is defined, where the transition maps are Euclidean translations.\\

For the purposes of studying billiard dynamics within polygons and generalized polygons, a more practical definition of a translation surface involves an at-most-countable collection of convex polygons $\{ P_n \} \subset \mathbb{R}^2 $, where each pair of edges is identified by translations. Denoting the collection of vertices of these polygons as  $ V $, we define the surface as
$$ S^\circ = \left( \left( \bigcup_{n \in \mathbb{N}} P_n \right) \setminus V \right) / \sim. $$
If $S^\circ$ is a connected space, we call $S$, the metric completion of $S^\circ$, a translation surface.
\\

These two definitions are equivalent (see \cite{DHV}). A translation surface $S$ is categorized as finite type if the collection of polygons is finite, or if the topological surface $S$ on which the translation atlas is defined is homeomorphic to a compact surface $X$; otherwise, it is termed infinite type.
\\

Now, for any direction $[0,2\pi)$, we have a well-defined translation flow in $\mathbb{C} = \mathbb{R}^2$ given by
$$ F_{\theta}^t(z) = z + t e^{i\theta}, $$
which yields a constant vector field
$$ X_{\theta} = \frac{\partial F_{\theta}^t}{\partial t} \bigg|_{t=0}(z). $$

The definition of a translation surface allows us to pull back this vector field to $S \setminus \Sigma$. For any $z \in S \setminus \Sigma$, we can consider a maximal integral curve $\gamma_z: I \to S \setminus \Sigma$ such that $\gamma_z(0) = z$, and we can define a flow on $S \setminus \Sigma$ as
$$ \phi_{\theta}^t(z) = \gamma_z(t). $$

This is the \textit{translation flow} in direction $\theta$. When the direction is not stated, meaning only writing $\phi_t$ we will mean the translation flow in the vertical direction.

\subsection{Ergodic Theory}

\begin{definition}

If $(X,T,\mathcal{B},\mu)$ is a measurable dynamical system, we say $T$ is an ergodic transformation if for every measurable invariant set $A$  such that $T^{-1}(A)=A$ implies  $\mu(A)\in\{0,1\}$.
\end{definition}

In terms of the Birkhoff theorem it can be stated as for every measurable function $f$  $$\lim \frac{1}{n}\sum_{i=0}^n f\circ T^n(x)\to \int_X fd\mu, $$
 for  $\mu-$almost every $x$.\\
 
In the ergodic theory category we have the following chain of implications:

$$ Mixing \to Weakly\:\: Mixing \to Ergodic.$$
 
\begin{definition}
    
We say that a transformation $T$ is  mixing if for every $A,B\in \mathcal{B}$ $$\lim_{n\to \infty}\mu(T^{-n}(A)\cap B)= \mu(A)\mu(B).$$

\end{definition}

In contrast to weakly mixing where the convergence of the splitting of measures is a Cesaro-convergence. Meaning $T$ is weakly mixing when for every measurable sets $A,B$ 
$$\lim_{n\to\infty} \frac{1}{n}\sum_{i=0}^{n-1}|\mu(T^{-n}(A)\cap B)-\mu(A)\mu(B)|=0.$$

Our aim is to study the continuous version of this properties.

\begin{definition}

A flow $\phi_t$ is a family of measurable transformations $\{\phi_t\}_{t\in \mathbb{R}}$ on a measurable space $(X,\mathcal{B},\mu)$ parametrised with $\mathbb{R}$ such that $\phi_{t+s}=\phi_t(\phi_s).$
\end{definition}

\begin{definition}
We say that the flow $\phi_t$ on a measurable space $(X,\mathcal{B},\mu)$ is  mixing  if for every $A,B\in\mathcal{B}$ $$\lim_{t\to\infty} \mu(\phi_{-t}(A)\cap B)= \mu(A)\mu(B).$$

As we shall see in following sections, the notion of equidistribution plays an important role.\\
\begin{definition}
    A sequence of numbers $\{x_n\}$ is \textit{equidistributed} in the interval $[a,b]$ if for every subinterval $[c,d]$ then $$\lim_{n\to\infty}\frac{ s_n(c,d)}{n}=\frac{c-d}{b-a}.$$
    Where $s_n(c,d)$ is the cardinality of the  set of numbers $1\leq k\leq n$ such that $x_k\in [c,d]$.
\end{definition}

\begin{remark}
    By the equidistribution theorem, if $q$ is an irrational number then the sequence $\{nq \mod{1}\:\: n\in \mathbb{N}\} $ is equidistributed in the unitary interval.
\end{remark}

 \begin{definition}
     A transformation $T:X\to X$ is said to be uniquely ergodic if there exists only one invariant measure for $T$. 
 \end{definition}

 When a transformation is uniquely ergodic, the unique invariant measure is ergodic, and the conclusion of the Birkhoff ergodic theorem applies for \textit{every} $x\in X$. \\

 Observe that if $T$ is a uniquely ergodic transformation in an interval $[a,b)$ then for any $x$, the sequence $x_n=T^n(x)$ fulfills that for every measurable function $f$ 
 $$\lim_{n\to \infty} \frac{1}{n}\sum_{i=0}^n f(x_n)\to\frac{1}{b-a} \int_a^b f(x)dx,$$
in particular the sequence $x_n$ equidistributes in the interval $[a,b)$.

 \end{definition}
\subsection{Cutting and stacking constructions}
Cutting and stacking constructions represent a powerful tool extensively utilized in ergodic theory to construct measurable invertible transformations on the interval. Intuitively, they can be conceptualized as a limit process involving compositions of periodic maps on subintervals. This process entails considering a sequence of transformations $T_1, T_2, \ldots, T_n, \ldots$ satisfying the following properties:

\begin{enumerate}
    \item \textbf{Domain}$(T_i)\subseteq \textbf{Domain}(T_{i+1})$.

    \item $T_{n+1|_{\textbf{Domain}(T_n)}}=T_n.$
\end{enumerate} 
We can define a map $T$ such that $$\textbf{Domain}(T)=\bigcup_{n\in \mathbb{N}}\textbf{Domain}(T_n).$$
And such that $T$ is the pointwise limit of the sequence $T_n$.\\

We are going to pay attention to a class of transformations derived from cutting and stacking constructions, referred to as \textit{rank-one transformations}. These transformations originate from a geometric framework inspired by Rokhlin's column theorem \cite{Cornfeld}. Their significance was highlighted by Ornstein \cite{Ornstein}, who employed them to construct examples of interval transformations demonstrating mixing properties without the presence of a square root. Ornstein's stochastic argument established the almost sure mixing property of these transformations. Subsequent contributions by Adams \cite{Adams},  or Creutz and Silva \cite{CS1}, \cite{CS2} and others have provided concrete examples and conditions under which these transformations exhibit mixing behavior.

\subsubsection{Rank-One Transformations}

We provide an algorithmic definition of such transformations. Consider an interval $I=[0,b)$ with $b$ possibly infinite, and a subinterval $J=[0,a)$ with $a$ necessarily finite. We denote the length of the interval $J$ as $A$. The interval $J$ serves as the initial step of the transformation, with the initial transformation $T_0$ being the identity. Interval $I$ acts as the domain of the transformation $T$. \\

Now, let $\{r_n\}$ be a sequence of positive integers. Firstly, we divide interval $J$ into $r_1$ subintervals $J_i$ with equal lengths $\frac{A}{r_1}$. Then, we introduce another sequence of non-negative integers $\{s_n\}$. Each interval $[a,a+\frac{s_1A}{r_1})$ is divided into $s_1$ equal parts, and each of these subintervals, referred to as spacers, is placed above one of the parts into which we divided the interval $J$.\\ 

After adding all these spacers above the corresponding subintervals, the set of all intervals stacked above each subinterval $J_i$ of the partition of $J$ is called a column $C_{1,i}$. We stack all these columns one above the other, such that column $C_{1,i}$ lies beneath $C_{1,i+1}$ for each $i\leq r_1-1$. The height $h_1$ of this first step of the transformation $T$  is given by $$h_1=r_1+ \sum_{i=1}^{r_1}s_{1,i},$$ where $s_{1,i}$ is the number of spacers added above interval $J_i$. \\

We define a transformation $T_1$ on the first $h_1-1$ intervals of this column $\cup_i C_{1,i}$ such that $$T_1(x,j)=(x,j+1),$$ where the second entry indicates the interval in the column. Throughout this text, this column will be referred to as the first step of the configuration of the transformation $T$.\\

To define the transformation $T_2$, we partition the first step of the transformation's configuration into $r_2$ equal parts, resulting in $r_2$ new subcolumns. We further divide the interval $[a+\frac{As_1}{r_1},a+\frac{As_1}{r_1}+ \frac{As_2}{r_1r_2})$ into $s_2$ equal parts, each denoted as spacers. These new spacers are positioned above  the $r_2$ new subcolumns. The number of spacers above each subcolumn is denoted as $s_{2,i}$. Similar to the previous step, we stack the new subcolumns $C_{2,i}$ one above the other. \\

The height $h_2$ is given by $$h_2=r_2h_1+ \sum_{i=1}^{r_2}s_{2,i}.$$ The transformation $T_2$ is defined on the first $h_2-1$ intervals as $T_2(x,j)=(x,j+1)$. It's important to note that $T_2$ coincides with $T_1$ in the intervals corresponding to the first step of the configuration. This new stack is referred to as the second step of the configuration of the transformation $T$.\\

Continuing this process, suppose we have defined the $n$th step of the configuration of the transformation $T$, with height $h_n$ and a well-defined \mbox{transformation $T_n$}. To define the next step, we partition the column with $h_n$ intervals into $r_{n+1}$ equal parts, resulting in $r_{n+1}$ new subcolumns. We further divide the interval $$[a+\sum_{i=1}^{n}\frac{s_nA}{\prod_{j=1}^{i}r_j},a+\sum_{i=1}^{n+1}\frac{s_{n+1}A}{\prod_{j=1}^{i}r_j}) $$ into $s_{n+1}$ equal parts, and they are positioned above each of the $r_{n+1}$ new subcolumns according to the provided extra information. The number of intervals attached to the top of each subcolumn is denoted as $s_{n+1,i}$. 
\\

We stack each of these new stacks one above the other, similar to previous cases. The height is given by $$h_{n+1}=r_{n+1}h_n+\sum_{i=1}^{r_{n+1}}s_{n+1,i}.$$ The transformation $T_{n+1}$ is defined such that $T_{n+1}(x,j)=(x,j+1)$ for each \mbox{$j\leq h_{n+1}-1$.}

Continuing this process to infinity, we obtain a transformation $T$ defined on the interval $$[0,b)=[0, \lim_{n\to\infty}\frac{h_n A}{r_1....r_n}).$$
\begin{definition}
    The sequences $\{r_n\}$ and $\{s_n\}$ are referred as \textit{cutting sequence} and \textit{spacer sequence} respectively, for the transformation $T$ that comes from the described cutting and stacking construction.
\end{definition}
\begin{definition}
    We call the interval $J=[0,a)$ the non-spacer interval and the interval $I\setminus{J}$ as the spacer interval.
\end{definition}

Under some  conditions (for example, when $\frac{r_n^2}{h_n}\to 0$), this interval $[0,b)$ is finite, which is the case we are interested in. Such conditions are referred to as \textit{restricted growth}. In \cite{CS1}, Creutz and Silva prove that rank-one transformations with restricted growth whose stacking spacer sequence $\{s_n\}$ is uniformly ergodic respect $T$  are  mixing. Later in \cite{CS2} the authors proved that a rank one transformation $T$ is mixing if and only if the spacer sequence $\{s_n\}$ is slice ergodic respect $T$. Additionally, by a result of Ryzhikov \cite{Ryz}, these transformations are $n$-fold mixing for any $n$. See Section \ref{nfold} for more details.

\subsubsection{Staircase Transformations}
We are going to define a particular class of rank-one transformations which are relevant for this paper.
\begin{definition}
    Let $T$ be a rank-one transformation, and $\{r_n\}$ its cutting sequence. We say $T$ is a \emph{Staircase Transformation} if for any step $n$ of the configuration, the number of spacers $s_{n,i}$ added  to the subcolumn $C_{n,i}$ is $i-1$ for $1\leq i\leq r_n$ and after adding the spacers, the subcolumn $C_{n,i+1}$ is stacked over the subcolumn $C_{n,i}$ for every $1\leq i \leq r_n-1$.
\end{definition}

The most typical example of a staircase transformation is the one regarded as the \emph{classical staircase} transformation, in which the cutting sequence is given by $r_n=n$. This was the first example of a staircase transformation to be proved to be  mixing \cite{Adams}.

\subsection{Translation Surfaces from Rank-One Transformations}

The description we present here is a specific instance of a broader framework outlined in \cite{LT}.

Let's recall the definition of a flow constructed under a function, often referred to as a suspension flow.\\

Let $ T:I\to I$ be a measurable transformation on the unit interval, and let \mbox{$f:I\to \mathbb{R}^+ $} be a bounded, square-integrable function that is strictly positive. Consider the space
$$ X=\{(x,y)|\: x\in I, \: y<f(x)\}. $$

We define a suspension flow $\phi_t=(I,T,f,\mu)$ as follows: for $(x,y)\in X$, the flow $\phi_t(x,y)=(x,y+t)$ if $y+t<f(x)$, and $\phi_t(x,y)=(T(x),0)$ if $y+t=f(x)$. This flow preserves the Lebesgue measure $\mu$ and provides an identification of the interval $I$ with the  graph of $f$.\\

Now, consider a transformation $T$ derived from a cutting and stacking process, and let $f:I\to \mathbb{R}^+$ be such that for some  $c\in I$ point of discontinuity for $T$, $f(x)=p$ for every $x\in [0,c)$, and $f(x)=q$ for every $x\in [c,1]$. Moreover, we require that $f$ is constant on each subinterval of continuity of the transformation $T$.\\

Observe that from this construction, we obtain $S$ together with the segments\\ $\{0\} \times [0,p]$, $\{c\}\times[ \min(\{p,q\},\max(\{p,q\})]$, and $\{1\}\times [0,q]$ as a rectangle or an $L-$shaped polygon.\\

Without loss of generality, we may assume that $p>q$. We do the following identification:

\begin{enumerate}
    \item Identify $\{0\}\times [0,q]$ with $\{1\}\times [0,q]$.\\
    
    \item Identify $\{c\}\times [q,p]$ with $\{0\}\times [q,p]$.\\
    
    \item If $f(x)=\epsilon(x)\in \{p,q\}$, identify $(x,0)$ with $(T(x),\epsilon(x))$.
\end{enumerate}

See Figure \ref{Lshaped}. Observe that after these identifications, we obtain a translation surface $S$, and the suspension flow $\phi_t$ built under $f$ is isomorphic in the measure-theoretical sense to the vertical translation flow in the surface $S$. 

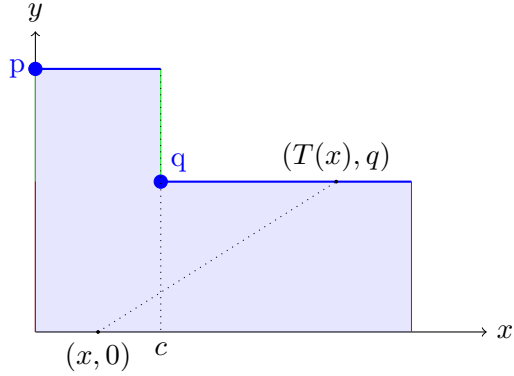
\begin{figure}
    \centering

\begin{tikzpicture}[scale=5]
    \draw[->] (0,0) -- (1.2,0) node[right] {$x$};
    \draw[->] (0,0) -- (0,0.8) node[above] 
    {$y$};
    \draw[red] (0,0)-- (0,.4);
    \draw[red] (1,0) -- (1,.4);
    \draw[green] (0,.4)--(0,.7);
    \draw[green] (1/3,.4) -- (1/3,.7);
    \draw[dotted] (1/6,0) -- (.8,.4);

    \fill[blue, opacity=0.1, domain=0:1/3, variable=\x]
        (0,0) -- plot (\x,0.7) -- (1/3,0) -- cycle;
    \fill[blue, opacity=0.1, domain=1/3:1, variable=\x]
        (1/3,0) -- plot (\x,0.4) -- (1,0) -- cycle;
    \draw[blue, thick, domain=0:1/3] plot (\x,0.7);
    \draw[blue, thick, domain=1/3:1] plot (\x,0.4);
    
    \filldraw[blue] (0,0.7) circle (0.5pt) node[left] {p};
    \filldraw[blue] (1/3,0.4) circle (0.5pt) node[above right] {q};
    
    \draw[dotted] (1/3,0.7) -- (1/3,0) node[below] {$c$};
    \filldraw[black] (1/6,0) circle(.1 pt) node[below]{$(x,0)$};
     \filldraw[black] (.8,.4) circle(.1 pt) node[above]{$(T(x),q)$};
\end{tikzpicture}
   \caption{Identify by translation the red sides and green sides and $(x,0)$ with $(T(x),q).$}
    \label{Lshaped}

\end{figure}

\section{ Mixing in the case of the classical staircase} \label{SectionClassical}

To illustrate the techniques we employ, we focus on the classical staircase transformation, 
defined by the cutting sequence $r_n = n$. This example represents a staircase with restricted 
growth and was the first instance of a staircase transformation proven to be mixing 
\cite{Adams}. We begin by introducing a class of functions $f$, which will serve as the 
foundation for constructing our flows.

\begin{definition}\label{2wise}
    Let $f$ be a measurable strictly positive function defined on the unit interval $[0,1]$ characterized by constant values in two intervals with rationally independent heights $p,q$ with $p>q$. Moreover let $T$ be a the classical staircase, assume that $f_{|non-spacer}=p$ and $f_{|spacer}=q$. We call this function as \textit{ $2-$wise constant}.
\end{definition}

In this section, our aim is to establish the  mixing property of the flow $\phi_t$ constructed under a classical staircase transformation $T$ and a roof $2-wise$ constant function $f$.  To accomplish this, we will rely on a  result by Ulcigrai \cite{Ulcigrai}, which outlines conditions for  a flow constructed under a given function and a measurable interval transformation to be  mixing. Formally our result says:
\\
 
\begin{theorem}\label{teo4.1}
    Let $T$ be the classical staircase transformation and let $p$ and $q$ two positive rationally independent numbers. Let $f$ be a 2-wise constant function such that $f$ attains the values $p,q$ on complementary intervals with the additional property that $x\in [0,1]$ lies in a spacer interval if and only if $f(x)=q$. Then the suspension flow $([0,1],T,f,\mu)$ is  mixing.
    \end{theorem}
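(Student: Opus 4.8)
The plan is to realize $\phi_t$ as the special flow $([0,1],T,f,\mu)$ and to deduce its mixing from Ulcigrai's mixing criterion for special flows \cite{Ulcigrai}, which reduces the mixing of $\phi_t$ to a quantitative statement about how the Birkhoff sums $S_nf(x)=\sum_{j=0}^{n-1}f(T^jx)$ stretch and equidistribute over the Rokhlin towers produced by the cutting and stacking construction. Concretely, to estimate the correlation $\mu(\phi_{-t}(A)\cap B)$ I would approximate $A$ and $B$ by finite unions of flow boxes sitting over columns of the $n$-th step of the configuration, and then track a point $(y,r)\in B$ under the flow: one has $\phi_t(y,r)=\bigl(T^{n(y,r,t)}y,\;r+t-S_{n(y,r,t)}f(y)\bigr)$, where $n(y,r,t)$ is the unique integer with $S_nf(y)\le r+t<S_{n+1}f(y)$. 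Thus the base coordinate is governed by the iterates of $T$, while the coordinate in the flow direction is governed by the fractional position $r+t-S_nf(y)$. Mixing follows once I show that, as $y$ ranges over the base of a tower, the first coordinate equidistributes in the base (available from the mixing of the classical staircase $T$ established in \cite{Adams}) and the values $S_nf(y)$ equidistribute modulo the relevant vertical scale, so that the two coordinates decouple and the correlation factorizes into $\mu(A)\mu(B)$.

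The heart of the argument is the equidistribution of $S_nf$ in the flow direction, and here is where the hypotheses on $f$ and on $p,q$ are used. Since $f$ is $2$-wise constant, taking the value $p$ on the non-spacer interval $J$ and $q$ on its complement, one has $S_nf(y)=q\,n+(p-q)\,N_p(y,n)$, where $N_p(y,n)=\#\{0\le j<n:\;T^jy\in J\}$ counts the visits of the orbit to the non-spacer part. Hence the whole problem reduces to the statistics of the single integer-valued cocycle $N_p$, i.e.\ of the ergodic sums of $\mathbf 1_{J}$. The leading growth, of size $\bigl(q+(p-q)\mu(J)\bigr)n$ for $\mu$-a.e.\ point by the ergodic theorem, is common to all points and carries no information; what matters is the fluctuation of $N_p(y,n)$ as $y$ varies across a column. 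The next step is to compute this fluctuation from the staircase combinatorics: at the $n$-th step the subcolumn $C_{n,i}$ carries $i-1$ spacers, so passing from one subcolumn to the next changes the number of spacers below the return level in a controlled, monotone way, and summing over the construction shows that $N_p(\cdot,h_n)$ takes a range of values whose cardinality grows with $n$. Because $p$ and $q$ are rationally independent, $p-q$ is irrational, and by the equidistribution theorem the corresponding values $(p-q)\,N_p(y,h_n)\bmod 1$ equidistribute on the circle; this is precisely the uniform stretching required by Ulcigrai's criterion.

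Phrased this way, the two-valued nature of the roof, which would forbid mixing over an interval exchange by Katok's theorem \cite{KA}, is harmless here: the base map $T$ is an infinite rank-one transformation with infinitely many discontinuities, so the bounded-variation obstruction does not apply, and the needed shear is supplied entirely by the unbounded accumulation of spacers rather than by singularities of $f$ as in the logarithmic case treated in \cite{Ulcigrai}.

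I expect the main obstacle to be the verification, with explicit control of error terms, that the computed stretching of $N_p$ actually meets the quantitative hypotheses of Ulcigrai's criterion. In particular I will need to bound the measure of the \emph{bad} base points for which the tower approximation of $A,B$ fails, or for which a partial traversal of the top of the tower spoils the count $N_p(y,n)$; to control how the discrete step number $n(y,r,t)$ depends on $y$ and $t$ so that a single scale $h_n$ can be chosen for all relevant $y$ as $t\to\infty$; and to upgrade the $\bmod\,1$ equidistribution of $(p-q)N_p$ to genuine equidistribution at the fixed vertical scale determined by the heights $p,q$ of the boxes approximating $A$ and $B$. The restricted growth of the classical staircase, $r_n=n$, is what should make all of these estimates uniform, and checking that it does is the crux of the proof.
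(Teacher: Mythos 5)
Your overall strategy is in substance the paper's own: verify Ulcigrai's criterion (Lemma~\ref{LemaUlci}) on the partial partitions given by the levels of the staircase towers, obtain horizontal equidistribution of the flowed levels from the mixing of $T$ proved in \cite{Adams} (this is Lemma~\ref{lemma4.2}), and obtain vertical equidistribution from the shear produced by the spacer counts together with an irrationality condition. But as written there is a concrete error in the irrationality step: rational independence of $p$ and $q$ does \emph{not} imply that $p-q$ is irrational. Take $p=1+\sqrt{2}$ and $q=\sqrt{2}$: these are linearly independent over $\mathbb{Q}$ and satisfy $p>q$, yet $p-q=1$, so $(p-q)N_p(y,n)\bmod 1\equiv 0$ and your claimed equidistribution on the circle fails identically. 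The correct reduction is modulo the fiber height, not modulo $1$: since $S_nf=pN_p+qN_q$, one has $S_nf(y)\bmod p = q\,N_q(y,n)\bmod p$, where $N_q$ counts spacer visits, and what rational independence genuinely provides is irrationality of $q/p$. The paper avoids this pitfall by normalizing $(p,q)=(1,q)$ from the start; with that normalization your computation becomes correct, so this slip is repairable, but the criterion you state is the wrong one.

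The deeper gap is the phrase ``so that the two coordinates decouple and the correlation factorizes.'' Marginal equidistribution of the base coordinate (from mixing of $T$) together with marginal equidistribution of the heights does not yield the joint statement that the criterion requires: the flowed interval could hit the correct proportion of base intervals and the correct proportion of heights while concentrating along a ``diagonal'' set that misses a fixed rectangle $R$. The paper flags exactly this failure mode (the remark following Proposition~\ref{propomiau} and Figure~\ref{diagonal}) and fills it with a dedicated argument, Lemma~\ref{H2equidistribution}, proving that heights equidistribute \emph{within each cylinder} over a base level $I^{(n)}$, by ordering the subintervals landing in a given cylinder relative to a leading one and controlling their height delays. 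Moreover, in the spacer fibers the attained heights are not all of $\{nq \bmod 1\}$ but only the subsequence of values falling in $[0,q)$, with entire blocks of spacers sharing a single height; handling this requires the unique ergodicity of powers of the first-return map of the rotation plus a block-counting argument (the sequences $n_j$ and probabilities $P_n$ in Proposition~\ref{propomiau}). Your proposal lists quantitative error control as the expected obstacle, but neither of these two issues—joint (cylinder-wise) equidistribution and the repetition structure in the spacer part—is identified; the factorization is asserted where the paper has to work hardest, so the proof as proposed is incomplete at its central step.
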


\textbf{Notation.} Throughout the text, we will denote the Lebesgue measure on $\mathbb{R}$ as $\lambda$, and the Lebesgue measure on $\mathbb{R}^2$ as $\mu$.\\

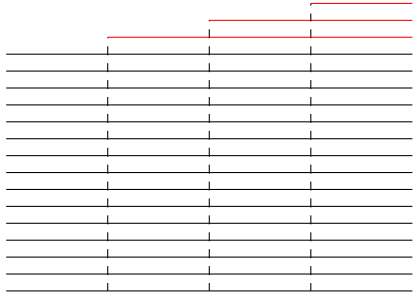
\begin{figure}
    \centering

    \label{Spacers}

\begin{tikzpicture}[scale=.45]

  \foreach \y in {0,0.5,1, 1.5,2,2.5, 3, 3.5, 4, 4.5, 5, 5.5, 6, 6.5, 7} {
   
      \draw (0,\y) -- (12,\y);
    
        \draw[dashed] (3,\y) -- (3,\y+0.5);
    \draw[dashed] (6,\y) -- (6,\y+0.5);
    \draw (9,\y)[dashed] -- (9,\y+0.5);

  }
  \draw[red] (3,7.5) -- (12,7.5);
  \draw[red] (6,8) -- (12,8);
  \draw[red] (9,8.5) -- (12,8.5);

  \draw[dashed] (6,7.5)-- (6,8);
  \draw[dashed] (9,7.5) -- (9,8.5);
 
\end{tikzpicture}
\caption{Here we consider the $4th$ step of the classical staircaise transformation, where the first $14$ levels have length equal $\frac{A}{6}$, then we add $6$ spacers of lenth $\frac{A}{24}$.}
\end{figure}

Following the notation in \cite{Ulcigrai}, we consider \emph{partial partitions}\footnote{
A partial partition refers to a finite collection of subintervals of $[0,1]$ whose union 
is a subset $A \subset [0,1]$.} $\eta(t)$ of the interval $[0,1]$, parameterized by 
$t \in \mathbb{R}_+$. Each $\eta(t)$ consists of finitely many subintervals. 
We denote by $Mesh(\eta(t))$ the maximal length of these subintervals, 
and by $\lambda(\eta(t))$ the total measure, i.e.\ the sum of the lengths of all 
subintervals in the partition.

\begin{lemma} (\cite{Ulcigrai}). \label{LemaUlci}

    Let $\phi_t$ be a suspension flow built under a measurable transformation $T$ on the interval $[0,1]$ and a roof function $f$. Assume for every rectangle $R\subseteq X$ and every $\epsilon,\delta>0$ there is a $t_0>0$ such that for every $t\geq t_0$ there exists a partial partition $\eta(t)$ of $[0,1)$ such that:
    \\
    \begin{enumerate}
        \item $\lambda(\eta(t))>1-\delta$ and $Mesh(\eta(t))\leq \delta$.
        \\
        \item For every $I\in \eta(t)$ we have that $\lambda(I\cap \phi_{-t}(R))\geq (1-\epsilon)\mu(R)\lambda(I).$
    \end{enumerate}

    Then the flow $\phi_t$ is mixing.
\end{lemma}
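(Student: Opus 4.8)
The plan is to deduce mixing from the hypothesis in three steps: a density reduction that replaces an arbitrary flowed set by a rectangle, a conservation-of-mass argument that turns the one-sided bound $(2)$ into genuine equidistribution of $\phi_{-t}(R)$, and a final approximation that handles the test set $B$. Throughout I normalize $\mu(X)=1$ and use that $\phi_t$ preserves $\mu$, so that $\mu(\phi_{-t}(R))=\mu(R)$ for every $t$.

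\textbf{Step 1 (reduction to a rectangle).} Finite disjoint unions of rectangles contained in $X$ form an algebra that generates the Borel $\sigma$-algebra and is dense in $L^1(\mu)$. If $A$ is measurable and $R$ is a rectangle with $\mu(A\triangle R)<\kappa$, then measure preservation gives, uniformly in $t$,
$$\bigl|\mu(\phi_{-t}(A)\cap B)-\mu(\phi_{-t}(R)\cap B)\bigr|\le\mu\bigl(\phi_{-t}(A\triangle R)\bigr)=\mu(A\triangle R)<\kappa,$$
and likewise $|\mu(A)\mu(B)-\mu(R)\mu(B)|<\kappa$. Hence it suffices to prove $\mu(\phi_{-t}(R)\cap B)\to\mu(R)\mu(B)$ for every rectangle $R$ and every measurable $B$.

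\textbf{Step 2 (equidistribution of $\phi_{-t}(R)$ --- the crux).} Fix $R$ and $\epsilon,\delta>0$, and for $t\ge t_0$ take the partition $\eta(t)$ supplied by the hypothesis. Condition $(2)$ states that over each atom $I\in\eta(t)$ the mass $\lambda\bigl(I\cap\phi_{-t}(R)\bigr)$ is \emph{at least} $(1-\epsilon)\mu(R)\lambda(I)$. Since $\sum_{I\in\eta(t)}\lambda\bigl(I\cap\phi_{-t}(R)\bigr)$ is the mass of $\phi_{-t}(R)$ over the portion of $[0,1)$ covered by $\eta(t)$, which has measure $\lambda(\eta(t))>1-\delta$, measure preservation forces this sum to equal $\mu(R)$ up to $O(\delta)$. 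Comparing with the summed lower bounds,
$$\sum_{I\in\eta(t)}\Bigl(\lambda\bigl(I\cap\phi_{-t}(R)\bigr)-(1-\epsilon)\mu(R)\lambda(I)\Bigr)\le\mu(R)-(1-\epsilon)(1-\delta)\mu(R).$$
Every summand is nonnegative by $(2)$, so the aggregate excess above the floor is $O(\epsilon+\delta)$; splitting the deviation into positive and negative parts and using $(2)$ once more to bound the negative part then yields the two-sided estimate
$$\sum_{I\in\eta(t)}\bigl|\lambda\bigl(I\cap\phi_{-t}(R)\bigr)-\mu(R)\lambda(I)\bigr|=O(\epsilon+\delta).$$
This is the main obstacle: the purely one-sided information in $(2)$ upgrades to equidistribution \emph{only because} the total mass of $\phi_{-t}(R)$ is pinned to $\mu(R)$, so no family of atoms can sustain a definite surplus without another running a deficit, which $(2)$ forbids.

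\textbf{Step 3 (conclusion).} For $B$ a finite union of atoms of $\eta(t)$ one expands $\mu(\phi_{-t}(R)\cap B)$ as a sum of the contributions over those atoms and inserts the Step 2 estimate to get $\mu(\phi_{-t}(R)\cap B)=\mu(R)\mu(B)+O(\epsilon+\delta)$. A general measurable $B$ is reached in two moves: since $\mathrm{Mesh}(\eta(t))\le\delta\to0$, any base-adapted set is approximated up to $O(\delta)$ by such unions; and the vertical (flow) direction costs nothing, because the group law $\phi_{-t}\circ\phi_u=\phi_{-(t-u)}$ gives $\mu(\phi_{-t}(R)\cap\phi_{-u}(C))=\mu(\phi_{-(t-u)}(R)\cap C)$ for a column $C$, so equidistribution against full-height columns at all large times transfers to their vertical translates, and these generate $L^1(\mu)$. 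Letting $t\to\infty$ and then $\epsilon,\delta\to0$ gives $\mu(\phi_{-t}(R)\cap B)\to\mu(R)\mu(B)$, and Step 1 promotes this to arbitrary measurable $A,B$, proving mixing. The approximation steps are routine; the genuinely delicate points are the equidistribution upgrade of Step 2 and the bookkeeping (via Fubini with respect to the roof $f$) that identifies the one-dimensional quantities $\lambda\bigl(I\cap\phi_{-t}(R)\bigr)$ with the relevant masses for $\mu$.
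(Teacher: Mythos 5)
The paper itself gives no proof of this lemma---it is quoted from \cite{Ulcigrai}---so your proposal has to stand on its own, and it does not: the step you yourself flag as the crux (Step 2) rests on a false justification. You claim that $\sum_{I\in\eta(t)}\lambda\bigl(I\cap\phi_{-t}(R)\bigr)$ equals $\mu(R)$ up to $O(\delta)$ ``by measure preservation.'' But this sum is a one-dimensional quantity: it is the $\lambda$-measure of the intersection of $\phi_{-t}(R)$ with the base $[0,1)\times\{0\}$, which is a $\mu$-null set. Preservation of the two-dimensional measure $\mu$ gives $\mu(\phi_{-t}(R))=\mu(R)$ and says nothing whatsoever about the $\lambda$-mass of a single horizontal slice of $\phi_{-t}(R)$; a priori that slice mass is only trapped between $(1-\epsilon)(1-\delta)\mu(R)$ and $1$. (The pinning you want is in fact true, but to obtain it one must apply the hypothesis to a finite family of rectangles $R_1=R,R_2,\dots,R_N$ partitioning $X$---each with its own partial partition $\eta_j(t)$---and use that the base slices of $\phi_{-t}(R_1),\dots,\phi_{-t}(R_N)$ partition the base, so the lower bounds for $R_2,\dots,R_N$ force the upper bound for $R$. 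Nothing of this sort appears in your argument.) Step 3 compounds the problem: you write $\mu(\phi_{-t}(R)\cap B)$ for $B$ ``a finite union of atoms of $\eta(t)$,'' but atoms of $\eta(t)$ are intervals of the base and hence $\mu$-null, so that quantity vanishes; the column-translate argument that is supposed to recover a general measurable $B$ is only gestured at, not carried out.

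The structurally sound route keeps your conservation-of-mass idea but runs it at the two-dimensional level, where measure preservation actually applies. Reduce both $A$ and $B$ to rectangles (your Step 1 works verbatim for $B$ as well). For $B=J\times[a,b]$, Fubini along the flow direction gives $\mu(\phi_{-t}(R)\cap B)=\int_a^b\lambda\bigl(J\cap S_{t+y}\bigr)\,dy$, where $S_{s}=\{x:\phi_{s}(x,0)\in R\}$ is the base slice at time $s$; this identity is the precise content of your phrase ``the vertical direction costs nothing,'' and it is available exactly because the hypothesis holds for \emph{all} times $t+y\ge t_0$. The atoms of $\eta(t+y)$ contained in $J$ then give $\lambda(J\cap S_{t+y})\ge(1-\epsilon)\mu(R)\bigl(\lambda(J)-3\delta\bigr)$, whence $\liminf_{t\to\infty}\mu(\phi_{-t}(R)\cap B)\ge\mu(R)\mu(B)$ for \emph{every} rectangle $B$. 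The matching upper bound now follows by complementation: partition $X$ (exactly, or up to measure $\kappa$) into rectangles $B=B_1,B_2,\dots,B_N$ and use $\sum_j\mu(\phi_{-t}(R)\cap B_j)=\mu(R)$---this is where measure preservation legitimately enters---together with the liminf bound applied to $B_2,\dots,B_N$. This yields $\mu(\phi_{-t}(R)\cap B)\to\mu(R)\mu(B)$ for all rectangles, hence mixing. In short, your outline has the right ingredients (reduction to rectangles, conservation of mass, Fubini over the roof), but the one-sided-to-two-sided upgrade must be performed after integrating over heights, not on the $\mu$-null base where you attempt it.
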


\subsubsection{Outline of the proof of Theorem \ref{teo4.1}}

Staircase transformations arise from the Rokhlin tower theorem. This theorem implies that for each step in the construction of the transformation, we obtain a finite number of subintervals whose lengths strictly decrease at each step. As the process continues indefinitely, the union of these subintervals will cover the interval $[0,1]$. These unions at each step provide natural candidates for partial coverings of $[0,1]$.\\

For any $\delta > 0$, there exists an $m$ such that for any subinterval $I_j^{(m)}$ from the $m$-step of the staircase transformation configuration, the following conditions are met:

\begin{enumerate}
    \item The measure of $I_j^{(m)}$ satisfies that $\lambda(I_j^{(m)})<\delta$.

    \item The measure of the union of these subintervals satisfy that
    $$\lambda\left(\bigcup_{i=1}^{h_m}I_i^{(m)}\right)>1-\delta.$$
\end{enumerate}  

Therefore for that $\delta$, the partial partition $\eta$ is the union of those intervals. Note that the partial partition depends only on $\delta$ and not $t\in\mathbb{R}_+$.\\

It is noteworthy that for any interval \( I^{(m)} \), the application of the flow \(\phi_t\) will produce a partition of \( I^{(m)} \) into several horizontal segments. Our goal is to approximate the distribution of these horizontal segments.
\\

To prove that the partitions arising from the staircase construction satisfy the second requirement of Lemma \ref{LemaUlci}, we first consider the cases where\\
\( R \subseteq [0, x_0] \times [0, p] \) and \( R \subseteq [x_0, 1] \times [0, q] \), with \( x_0 \) being the point that divides the spacer and non-spacer intervals. These two rectangles yield a natural partition of the $L$ shaped polygon, the first is the \textit{ non-spacer} part of the polygon and the second is the \textit{ spacer} part of the polygon. Then, we have to approximate the product of measures \(\mu(R) \lambda(I)\) by \(\lambda(\phi_t(I) \cap R)\).  For the remainder of the paper we assume that $(p,q)=(1,q)$.  
\\

First, we prove a lemma inspired by the fact that a staircase transformation \( T \) exhibits mixing properties \cite{Adams}. Lemma~\ref{lemma4.2} states that there exists an $m_0$ such that for every $n\geq m_0$,  any level \( I^{(n)} \) at the \( n \)-th step of the staircase transformation, and any rectangle \( R \), the measure  
\[
\lambda(\pi_x(\phi_t(I^{(n)}))\cap \pi_x(R))
\]  
converges to \( \lambda(I^{(n)})\lambda(\pi_x(R)) \).  \\

Next, we establish that for any level at any step of the staircase construction, the action of the flow \( \phi_t \) on this interval produces an equidistributed set of heights. More precisely, given any level \( I^{(n)} \), we define the \( I^{(n)} \)-cylinder as either \( I^{(n)} \times [0,1) \) or \( I^{(n)} \times [0,q) \), depending on whether \( I^{(n)} \) is a non-spacer or a spacer interval. We then show that for any \( I^{(m)} \), the application of \( \phi_t \) generates different heights. In the non-spacer case, we prove that for any vertical subinterval \( [a,b] \subseteq [0,1] \) and any horizontal interval \( I^{(n)} \), the ratio of the number of heights attained by \( \phi_t(I^{(m)}) \) within \( [a,b] \) (such that \( \phi_t(I^{(m)}) \) remains in the \( I^{(n)} \)-cylinder) to the total number of heights within the \( I^{(n)} \)-cylinder converges to the measure of \( [a,b] \). Similarly, for any \( [a,b) \subseteq [0,q) \), the ratio of the number of heights attained by \( \phi_t(I^{(m)}) \) within \( [a,b] \) in the \( I^{(n)} \)-cylinder to the total number of heights in \( I^{(n)} \) converges to \( \lambda([a,b]) / q \).  \\

From there, if $R=b(R)\times h(R)$ and $$b(R)=\bigcup_{i\in J}I^{(n_i)}$$ where $J$ is some index and $I^{(n_i)}$ is an interval in the $n_i$-th step of the staircase configuration then 

\[
\lim_{t\to\infty}\lambda(I^{(m)}\cap \phi_{-t}(R))=\lim_{t\to\infty}\sum_{i\in J}\lambda(\phi_t(I)\cap I^{(n_i)}\times h(R))\geq \mu(R)\lambda(I^{(m)}).
\]

For the classical staircase case we can calculate explicitly the intervals where the roof function is $1$ and $q$. If the first interval of definition has length $A$, \label{lenght} then the first spacer $s_2$ (remember that $s_1=0$) will have length $\frac{A}{2}$, for the third step we have to add $3$ spacers each of length $\frac{A}{6}$. In general, we have that the total length of all the spacers added after $k$ steps is $$
\sum_{n=1}^k \frac{A}{n!}\sum_{i=1}^n(i-1)=\sum_{n=1}^{k} \frac{A(n-1)}{2(n-1)!}=\frac{A}{2} + \sum_{n=2}^{k}\frac{A}{2(n-2)!}.  $$ Which converges to $\frac{A}{2} + \frac{Ae}{2}$. Therefore if we want  the interval $I$ to have length $1$ then we should have that $A=\frac{2}{3+e}$. \\

\begin{figure}
    \centering

\begin{tikzpicture}[scale=5]
  
  \draw[->] (0,0) -- (1.1,0) ;
  \draw[->] (0,0) -- (0,1.1) ;
  
  \draw[thick,blue] (0,1) -- (0.35,1);
  \draw[thick,blue] (0.35,0.8) -- (1,0.8);
  
  \filldraw[black] (0,1) circle (0.3pt) node[left] {1};
  \filldraw[black] (0.35,1) circle (0.3pt) node[above]{$\frac{2}{3+e}$};
  \filldraw[black] (0.35,0.8) circle (0.01pt);
  \filldraw[black] (1,0.8) circle (0.3pt) node[right] {1};
   \draw[ ->, dashed,red] (.1,0) -- (.1,.5);
  \draw[ ->, dashed,red] (.1,.5) -- (.1,1);
  \draw[ ->, dashed,red] (.28,0) --(.28,.5);
  \draw[ ->, dashed,red] (.28,.5) -- (.28,1);

  \draw[ ->, dashed,red] (.46,0) -- (.46,.4);
  \draw[ ->, dashed,red] (.46,.4) -- (.46,.8);
  \draw[ ->, dashed,red] (.16,0) --  (.16,.5); 
  \filldraw[black] (.1,0) circle (0.3pt) node[below left, font=\tiny]{x};
\filldraw[black] (.28,0) circle (0.3pt) node[below right, scale=.5]{$T(x)$};
\filldraw[black] (.46,0) circle (0.3pt) node[below, scale=.5]{$T^2(x)$};
\filldraw[black] (.16,0) circle (0.3pt) node[below ,scale=.5]{$T^3(x)$};
\filldraw[black] (0,0) node[below]{0};
\filldraw[black] (0,.8) circle(0.3pt) node[left]{$q$};
\filldraw[black] (1,0) circle (0.3pt) node[below]{1};

\end{tikzpicture}

   \label{fig:enter-label}
   \caption{Part of the orbit of the special flow built under $f$ and the classical staircase $T$.}
\end{figure}
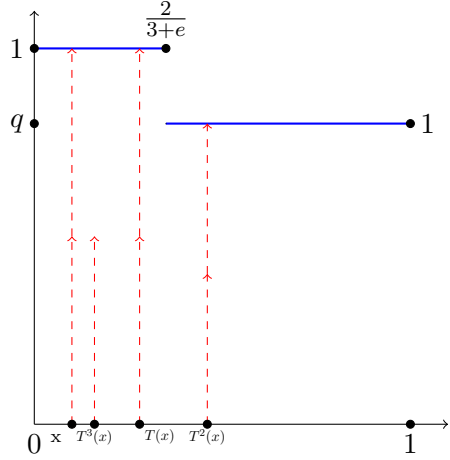

\begin{lemma} \label{lemma4.2}
There exists an $m_0$ depending solely on $q$ such that for every \mbox{$m\geq m_0$},  when $I^{(m)}$ be an $m$-level of the configuration of the classical  staircase $T$ and $R$ is a rectangle under the graph of $f$. Then if $\pi_x$ is the horizontal projection and     
     $b(R)=\pi_x(R)$ is the base of the rectangle $R$, then $$\lim_{t\to \infty} \lambda(\phi_t(I^{(m)})\cap \pi_x^{-1}(\pi_x(R))=\lambda(I^{(m)})\lambda((b(R)).$$ \\

\end{lemma}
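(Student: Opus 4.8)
The plan is to push the statement down to the base interval $[0,1)$, organise the flow using the Rokhlin tower of $T$ at a height adapted to $t$, and then read off the limit from the mixing of $T$. Write $S_Nf(x)=\sum_{k=0}^{N-1}f(T^kx)$ for the Birkhoff sums of the roof and $N(x,t)=\max\{N:S_Nf(x)\le t\}$ for the number of complete roof crossings up to time $t$; then $\phi_t(x,0)$ has horizontal coordinate $T^{N(x,t)}(x)$ and height $t-S_{N(x,t)}f(x)$. \textbf{Reduction to the base.} Since $\phi_t$ is a flow it is injective on $X$, and on every set where $N(\cdot,t)$ is constant the map $x\mapsto\phi_t(x,0)$ is a horizontal translation followed by a vertical shift, so it preserves horizontal length and traces $\phi_t(I^{(m)})$ without backtracking at unit horizontal speed. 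Consequently the arc length of $\phi_t(I^{(m)})$ lying over $B:=b(R)=\pi_x(R)$ equals the length of its parameter set, i.e.
\[
\lambda\big(\phi_t(I^{(m)})\cap\pi_x^{-1}(B)\big)=\lambda\big(\{x\in I^{(m)}:T^{N(x,t)}(x)\in B\}\big),
\]
and it is the limit of the right-hand side, as $t\to\infty$, that I would establish.

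\textbf{Tower decomposition.} Because $q\le f\le 1$ one has $t-1\le N(x,t)\le t/q$ for all $x$. I would fix $n=n(t)\to\infty$ with $t/h_n\to 0$ and decompose the base interval $I^{(m)}=I^{(m)}_j$ into the level-$n$ intervals $Q_i=I^{(n)}_{j_i}$ that it contains (there are $r_{m+1}\cdots r_n$ of them, each of length $A/n!$), which is possible because the step-$n$ tower refines the step-$m$ tower. The key coherence property is that for $x$ in a single level $Q_i$ the iterates $T^kx=I^{(n)}_{j_i+k}$ lie in one tower level for $0\le k\le h_n-1-j_i$, so $f(T^kx)$, hence $S_Nf(x)$ and $N(x,t)$, is constant on $Q_i$, say $N(\cdot,t)\equiv N_i$; moreover $T^{N_i}(Q_i)=I^{(n)}_{j_i+N_i}$ is again a single level-$n$ interval whenever $j_i+N_i<h_n$. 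The pieces with $j_i+N_i\ge h_n$ (those reaching the top of the tower before time $t$) satisfy $j_i>h_n-t/q$, so they fill a fraction $O(t/h_n)\to 0$ of $I^{(m)}$ and go into an error term. Grouping the surviving pieces by the common value $N$ of $N(\cdot,t)$ and setting $A_N=\{x\in I^{(m)}:N(x,t)=N\}$, I obtain
\[
\lambda\big(\{x\in I^{(m)}:T^{N(x,t)}(x)\in B\}\big)=\sum_{N}\lambda\big(A_N\cap T^{-N}B\big)+o(1),
\]
while the target value factorises as $\lambda(I^{(m)})\lambda(B)=\sum_N\lambda(A_N)\lambda(B)$.

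\textbf{Passage to the limit via mixing.} It remains to compare the two sums, and here the mixing of the classical staircase enters: for each fixed set $\lambda(A_N\cap T^{-N}B)\to\lambda(A_N)\lambda(B)$, and every index occurring above satisfies $N\ge t-1\to\infty$. The difficulty — and the place where the threshold $m_0=m_0(q)$ appears — is that both the partition $\{A_N\}$ and the range of indices $N$ depend on $t$ (indeed $n(t)\to\infty$), so plain mixing, whose rate may depend on the set, does not suffice: I need the convergence to be uniform over the family of sets $A_N$ that can arise. I would extract this from Adams' mixing mechanism for the staircase, which is itself proved by controlling the overlaps $\lambda(I^{(n)}_a\cap T^{-N}I^{(n)}_b)$ of tower levels, and which yields, for every $\epsilon>0$, an $N_0$ such that $(\lambda(B)-\epsilon)\lambda(E)\le\lambda(E\cap T^{-N}B)\le(\lambda(B)+\epsilon)\lambda(E)$ simultaneously for all $N\ge N_0$ and all $E$ that are unions of tower levels. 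Applying this with $E=A_N$ for each $N\ge t-1\ge N_0$ and summing, using $\sum_N\lambda(A_N)=\lambda(I^{(m)})$, squeezes the sum between $(\lambda(B)\pm\epsilon)\lambda(I^{(m)})$, which gives the assertion. I expect this uniform mixing estimate over unions of tower levels to be the main obstacle; the rest is bookkeeping with the tower and the elementary reduction to the base.
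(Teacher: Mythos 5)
Your reduction to the base and your tower decomposition are sound --- in fact, counting arc length with multiplicity via injectivity of $\phi_t$ arguably sidesteps the horizontal-overlap issue that the paper handles by introducing $m_0$ --- but the step you yourself flag as the main obstacle is a genuine gap, because the uniform mixing estimate you invoke is false as stated. You ask for an $N_0$ such that $(\lambda(B)-\epsilon)\lambda(E)\le\lambda(E\cap T^{-N}B)\le(\lambda(B)+\epsilon)\lambda(E)$ for all $N\ge N_0$ and \emph{all} $E$ that are unions of tower levels. Take any $N\ge N_0$ and then a stage $n$ with $h_n\gg N$. Since $T$ maps each stage-$n$ level below the top onto the next level by translation, and since $B$ itself is a union of stage-$n$ levels up to an error that vanishes as $n\to\infty$, the set $T^{-N}B$ coincides, up to measure $O(N/h_n)+o(1)$, with a union $E$ of stage-$n$ levels; for this $E$ one gets $\lambda(E\cap T^{-N}B)\approx\lambda(E)$, which violates the upper bound as soon as $\lambda(B)<1-\epsilon$. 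Crucially, your argument lives exactly in this dangerous regime: the error estimate for pieces hitting the top of the tower forces $h_{n(t)}\gg t\ge N$, so your sets $A_N$ are unions of levels of a tower far deeper than $N$ --- precisely the class of sets rich enough to reproduce $T^{-N}B$, for which no uniform rate can exist. Whatever uniformity Adams' mechanism provides (his estimates tie the tower stage to the power $N$, with $h_n$ comparable to $N$), it cannot include the statement you need, because that statement is false.

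What would actually be needed is to show that the \emph{specific} sets $A_N=\{x\in I^{(m)}:N(x,t)=N\}$, cut out by the Birkhoff sums of the roof, do not correlate with $T^{-N}B$; that decorrelation is the real content of the lemma, it is where the staircase structure and the two roof values must enter, and it is not bookkeeping. For comparison, the paper avoids applying mixing to $t$-dependent sets altogether: it writes $\pi_x(\phi_t(I^{(m)}))$ as a union $\bigcup_j T^{l_j}(I^{(m)}_j)$ with exponents $m(t)\le l_j\le M(t)$, and then sandwiches $\lambda(b(R)\cap\pi_x(\phi_t(I^{(m)})))$ between $\lambda(b(R)\cap T^{M(t)}(I^{(m)}))$ and $\lambda(b(R)\cap T^{m(t)}(I^{(m)}))$, i.e.\ between quantities in which a single power of $T$ is applied to the one fixed set $I^{(m)}$, so that plain mixing of $T$ suffices (the justification of that monotone sandwich is itself delicate, but it shows the intended mechanism: keep the set fixed and let only the exponent vary). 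Your scheme, by contrast, structurally requires mixing to hold uniformly over a $t$-dependent family of sets, and that is exactly what cannot be had.
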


\begin{proof}
For this proof, we will use the fact that the classical staircase transformation $T$ is mixing.\\

First we need to check that if $m$ is large, then for arbitrary $t$ $$\lambda(\pi_x(\phi_t(I^{(m)}))=\lambda(I^{(m)}).$$

    Suppose that for a level $I^{(m)}$ $$\lambda(\pi_x(\phi_t(I^{(m)})))<\lambda(I^{(m)}).$$

    This implies that there exist two different sublevels $I^{(n+m)}_{i}$ and $I^{(n+m)}_{j}$ and non negative real numbers $t,s$ such that $$\phi_{t-s}(I^{(n+m)}_{i})=\phi_t(I^{(n+m)}_{j}).$$ Moreover, $s\geq0$ must be smaller than $1$ or $q$.\\

    Observe that if we order upwards the sublevels in which $I^{(m)}$ is partitioned in the $(m+n)$-th step of the configuration of the staircase,  then $j<i$. This complication arises because under the intervals positioned beneath $1$, there may be accumulations of other intervals at varying heights. To illustrate this concept: If an interval lies below $q$, a single iteration of $\phi_q$ elevates this interval to the subsequent level in the staircase configuration. Conversely, if the interval lies beneath $1$, there exists a value $N_0>1$ such that $(N_0-1)q<1\leq N_0q$. This indicates that it takes several iterations (specifically, at most $N_0$ iterations) before the interval advances to the next level in the staircase configuration, therefore if two sublevels $I^{(n+m)}_i,I^{(n+m)}_j$ are in a distance smaller than $2N_0$ it follows that under the action of the flow $\phi_t$ we might encounter several instances $t_l$ in which 
    $$\pi_x(\phi_{t_l}(I_i^{(n+m)}))\cap \pi_x(\phi_{t_l}(I_j^{(n+m)}))\neq\emptyset.$$ 

We will demonstrate that if the initial interval is sufficiently small we will not find this problem.
\\
Observe that if $I^{(m)}$ is in the $m$-th step of the configuration of the staircase, when we consider the $m+1$ subintervals in which it is divided in the $(m+1)$-th step of the configuration of the staircase, then the distance between any two of those levels will be bounded from bellow by $h_m$, the height of the $m$-th step of the configuration. Since $h_m$ is an increasing sequence, then there exists an $m_0$ such that $h_{m_0}\geq M_0= 4N_0$, therefore for any $m\geq m_0$ and for any interval $I^{(m)}$ in the $m$-th step of the configuration of the staircase, it follows that the number of levels between any two subintervals $I^{(m+n)}_i$ and $I^{(m+n)}_j$ for any $n\geq 0$ will be bounded from bellow by $4N_0$ and therefore   
   $$\pi_x(\phi_{t}(I_i^{(n+m)}))\cap \pi_x(\phi_{t}(I_j^{(n+m)}))=\emptyset.$$
   
Continuing with the proof. Notice that for each $t$ there exists an $n\geq 0$ such that $\phi_t(I^{(m)})$ is scattered in $\frac{n!}{m!}$ subintervals. For each subinterval $I_j^{(m)}$ with $j=1,...,\frac{n!}{m!}$ there exists $l_j$ such that $\pi_x(\phi_t(I_j^{(m)}))=T^{l_j}(I_j^{(m)})$. From previous observation $$\pi_x(\phi_t(I^{(m)}))=\bigcup_{j=1}^{n!/m!}T^{l_j}(I_j^{(m)}).$$
Define $m(t)$ as the minimum of such $l_j$ and $M(t)$ as the maximum. We can assume that for large $t$ it holds true that
$$|\lambda(b(R)\cap T^{M(t)}(I^{(m)}))-\lambda(b(R))\lambda(I^{(m)})|\leq$$
$$|\lambda(b(R)\cap \pi_x(\phi_t(I^{(m)})))-\lambda(b(R))\lambda(I^{(m)})|\leq$$                         $$|\lambda(b(R)\cap T^{m(t)}(I^{(m)}))-\lambda(b(R))\lambda(I^{(m)})|.$$
    Since $T$ is mixing, this implies that 
    $$\lambda(b(R)\cap \pi_x(\phi_t(I^{(m)})))\to\lambda(b(R))\lambda(I^{(m)}).$$
\end{proof}

\begin{remark}
    
Let $m_0$ be the integer such that for every $m\geq m_0$ and every $I^{(m)}$ Lemma \ref{lemma4.2} holds. If $I^{(n)}$ is an interval in the $n$-th step of the configuration of the staircase with $n<m_0$, observe that $I^{(n)}$ is made up by 
$$I^{(n)}=\bigsqcup_{i=1}^{m!/n!}I^{(m)}_i.$$
Therefore for any rectangle $R$ it follows that
$$\lim_{t\to\infty}\lambda(\phi_t(I^{(n)})\cap \pi_x^{-1}(b(R)))=\sum_{i=1}^{m!/n!}\lambda(I^{(m)}_i)(\lambda(b(R))=\lambda(I^{(n)})\lambda(b(R)).$$
\end{remark} 

It is worth noting that when we consider an interval $I^{(n)}$ and observe how it evolves under the flow $\phi_t$, over time, this interval will break up into several horizontal segments scattered throughout the polygon. For the second part of the proof, our goal is to approximate the distribution of the heights achieved by these horizontal segments.\\

Our strategy will be to assume, without loss of generality, that the heights of the polygon are 1 and $q$. We then consider a discretization of the flow given by the iterates of $\phi_q$. By doing so, we aim to emulate the behavior of an irrational rotation and demonstrate that the heights of the horizontal segments follow a distribution similar to the set $ \{nq \mod{1} : n \in \mathbb{N}\} $.

\subsubsection{Behaviour of the first subcolumn}\label{Behaviour} 

The discussion that follows is crucial for delving into the second part of the proof, with the aim of convincing us that by discretizing the flow $ \phi_t $ through iterations of $ \phi_q $, applying $ \phi_{mq} $ to any level $ I^{(n)} $  yields an increasing sequence of heights contained in $ \{nq, n \in \mathbb{N} \mod{1}\} $ .\\

\textbf{Notation.} We say that $ x \in I $ \textit{lies beneath $ q $} if $ f(x) = q $. similarly, we say that $x\in I$ \textit{ lies beneath $1$} $ f(x) = 1 $.\\

Note that if a subinterval $ I^{(m)}$ lies beneath $ q $ at height zero, applying $ \phi_q $  resets its height to zero. Subsequent applications of $ \phi_q $  maintains this zero height as long as the subinterval remains beneath $ q $, until it eventually reaches a height of zero beneath $ 1 $. At this point, the application of $ \phi_q $  results in alternating heights: first $ q $, then $ 2q \mod{1} $, and so forth. Due to the construction of the classical staircase, the subinterval passes through the roof whose value is $ 1 $ twice consecutively before going to the spacer side.\\

Now, consider $ I^{(m)} $ as the last level of the $ m $-th step configuration of the staircase before it breaks into $ m+1 $ new subintervals $ I_j^{(m)} $ with $ j \leq m+1 $. We explain that the heights attained by  $ \phi_{nq}(I^{(m)}) $ with $m+1\leq n\leq h_m$ coincide with the heights attained  by $ \phi_{nq}(I^{(m)}_1),\phi_{(n-1)q}(I^{(m)}_1),\ldots,\phi_{(n-1-m)q}(I^{(m)}_1) $.\\

Let \( I^{(m)} \) be as defined in the previous paragraph. Observe that each subinterval \( I^{(m)}_j \) has \( j-1 \) spacers above it, all of which lie beneath \( q \) by construction. Consequently, applying \( \phi_q \) to \( I^{(m)}_j \) up to \( j-1 \) times ensures that \( \phi_{nq}(I^{(m)}_j) \) remains beneath \( q \) and has height zero for any \( n \leq j-1 \). Moreover, applying \( \phi_q \) exactly \( j \) times to \( I^{(m)}_j \) results in an image that lies beneath \( 1 \) and also has height zero.  \\

Thus, for any \( j \leq m \), the height of \( \phi_q(I^{(m)}_{j+1}) \) matches that of \( I^{(m)}_{j} \). This observation implies that the heights attained by successive iterations of \( \phi_{nq} \) on \( I^{(m)}_{j+1} \) are directly related to those attained by iterating \( \phi_{(n-1)q} \) on \( I^{(m)}_{j} \). Therefore, in the \( (m+1) \)-th step configuration of the staircase, the heights of the intervals \( I^{(m)}_j \) are fully determined by the heights attained by \( I^{(m)}_1 \).  We will soon see that this reasoning extends to arbitrarily large iterations. For now, it is clear this holds when \( n \leq h_m \). See Figures \ref{Heights1}, \ref{Heights2}, and \ref{Heights3} for a visualization of this concept.\\

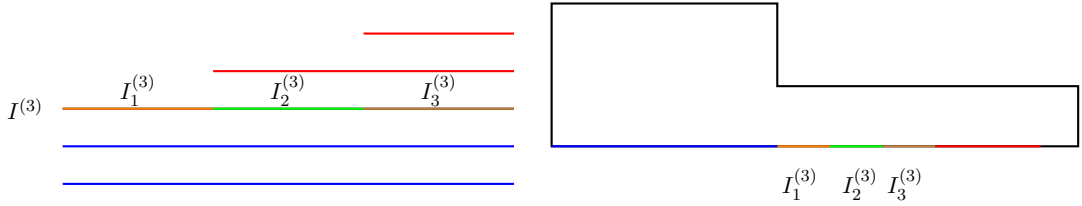
\begin{figure}
    \centering

\begin{tikzpicture}

\draw[blue, thick] (0,-2) -- (6,-2);
\draw[blue, thick] (0,-2.5) -- (6,-2.5);

\draw[thick] (0,-1.5) -- (2,-1.5);
\draw[thick] (2,-1.5) -- (4,-1.5);
\draw[thick] (4,-1.5) -- (6,-1.5);

\draw[thick,orange] (0,-1.5) -- (2,-1.5);
\draw[thick,green] (2,-1.5) -- (4,-1.5);
\draw[thick, brown] (4,-1.5) -- (6,-1.5);

\draw[red, thick] (2,-1) -- (6,-1);

\draw[red, thick] (4,-0.5) -- (6,-0.5);

\draw[thick] (6.5,-2) -- (13.5,-2) -- (13.5,-1.2) -- (9.5,-1.2) -- (9.5,-.1) -- (6.5,-.1) -- cycle;
\draw[thick, orange] (9.5,-2)--(10.2,-2);
\draw[thick, green] (10.2,-2)--(10.9,-2);
\draw[thick, brown] (10.9,-2)--(11.6,-2);
\draw[thick, blue] (6.5,-2)--(9.5,-2);
\draw[thick, red] (11.6,-2)--(13,-2);

\node[scale=.8] at (-.5,-1.5) {$I^{(3)}$};
\node[scale=.8] at (1,-1.25) {$I^{(3)}_1$};
\node[scale=.8] at (3,-1.25) {$I^{(3)}_2$};
\node[scale=.8] at (5,-1.25) {$I^{(3)}_3$};
\node[scale=.8] at (9.8,-2.5) {$I^{(3)}_1$};
\node[scale=.8] at (10.6,-2.5) {$I^{(3)}_2$};
\node[scale=.8] at (11.2,-2.5) {$I^{(3)}_3$};
\end{tikzpicture}
 \caption{Left: third step of the staircase configuration, blue levels are non spacers, red levels are spacers, the level $I^{(3)}$ is partinioned in the sublevels before it breaks into three levels.\\
 Right: Representation of $I^{(3)}$ in the $L$-shaped polygon. }
    \label{Heights1}
\end{figure}

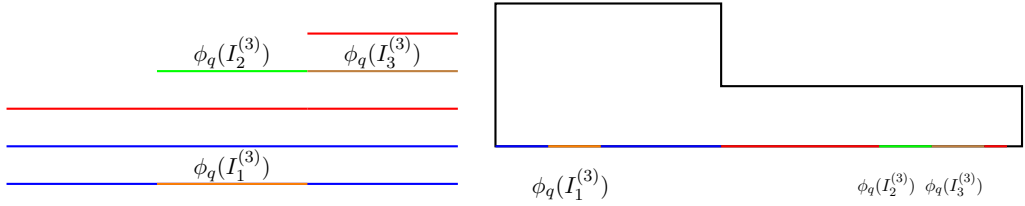
\begin{figure}
    \centering

\begin{tikzpicture}

\draw[blue, thick] (0,-2) -- (6,-2);
\draw[blue, thick] (0,-2.5) -- (6,-2.5);
\draw[orange, thick] (2,-2.5)--(4,-2.5);

\draw[red, thick] (0,-1.5) -- (2,-1.5);
\draw[red, thick] (2,-1.5) -- (4,-1.5);
\draw[red, thick] (4,-1.5) -- (6,-1.5);

\draw[thick, green] (2,-1) -- (4,-1);
\draw[thick, brown] (4,-1)--(6,-1);

\draw[red, thick] (4,-0.5) -- (6,-0.5);

\draw[thick] (6.5,-2) -- (13.5,-2) -- (13.5,-1.2) -- (9.5,-1.2) -- (9.5,-.1) -- (6.5,-.1) -- cycle;

\draw[thick, green] (11.6,-2)--(12.3,-2);
\draw[thick, brown] (12.3,-2)--(13,-2);
\draw[thick, blue] (7.9,-2)--(9.5,-2);
\draw[thick, orange] (7.2,-2)--(7.9,-2);
\draw[thick, blue] (6.5,-2)--(7.2,-2);

\node[scale=.8] at (3,-2.25) {$\phi_q(I^{(3)}_1)$};
\node[scale=.8] at (3,-.75) {$\phi_q(I^{(3)}_2)$};
\node[scale=.8] at (5,-.75) {$\phi_q(I^{(3)}_3)$};
\node[scale=.8] at (7.5,-2.5) {$\phi_q(I^{(3)}_1)$};
\node[scale=.6] at (11.7,-2.5) {$\phi_q(I^{(3)}_2)$};
\node[scale=.6] at (12.6,-2.5) {$\phi_q(I^{(3)}_3)$};

\draw[thick, red] (9.5,-2)--(11.6,-2);
\draw[thick, red] (13,-2)--(13.3,-2);
\end{tikzpicture}
 \caption{After one application of $\phi_q$ the heights are reseted to zero but the intervals spread in the staircase configuration. }
    \label{Heights2}
\end{figure}
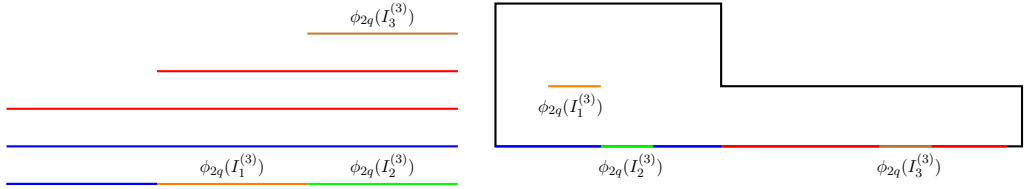
\begin{figure}
    \centering

\begin{tikzpicture}

\draw[blue, thick] (0,-2) -- (6,-2);
\draw[blue, thick] (0,-2.5) -- (2,-2.5);

\draw[thick, red] (0,-1.5)--(6,-1.5);
\draw[thick, blue] (0,-2.5)--(2,-2.5);

\draw[thick, green] (4,-2.5) -- (6,-2.5);
\draw[thick, brown] (4,-.5)--(6,-.5);

\draw[thick, orange] (2,-2.5) -- (4,-2.5);
\draw[red, thick] (2,-1)--(6,-1);
\draw[thick] (6.5,-2) -- (13.5,-2) -- (13.5,-1.2) -- (9.5,-1.2) -- (9.5,-.1) -- (6.5,-.1) -- cycle;
\draw[thick, orange] (7.2,-1.2)--(7.9,-1.2);
\draw[thick, green] (7.9,-2)--(8.6,-2);
\draw[thick, brown] (11.6,-2)--(12.3,-2);

\node[scale=.6] at (3,-2.25) {$\phi_{2q}(I^{(3)}_1)$};
\node[scale=.6] at (5,-2.25) {$\phi_{2q}(I^{(3)}_2)$};
\node[scale=.6] at (5,-.25) {$\phi_{2q}(I^{(3)}_3)$};
\node[scale=.6] at (7.5,-1.45) {$\phi_{2q}(I^{(3)}_1)$};
\node[scale=.6] at (8.3,-2.25) {$\phi_{2q}(I^{(3)}_2)$};
\node[scale=.6] at (12,-2.25) {$\phi_{2q}(I^{(3)}_3)$};
\draw[thick, blue] (6.5,-2)--(7.9,-2);
\draw[thick, blue] (8.6,-2)--(9.5,-2);
\draw[thick, red] (9.5,-2)--(11.6,-2);
\draw[thick, red] (12.3,-2)--(13.3,-2);
\end{tikzpicture}
 \caption{After two iteration of $\phi_q$ we obtain a first height different than zero }
    \label{Heights3}
\end{figure}

This behavior continues until the level $I^{(m)}_{m+1}$ reaches the final level in the \mbox{$(m+1)$-th} step of the staircase configuration, just before it splits into $m+2$ new sublevels $I^{(m)}_{m+1,l}$, where $l \leq m+2$. Using a similar argument, we find that the dynamics at this step of the configuration for the sublevels $I^{(m)}_{m+1,l}$ are governed by those attained by $I^{(m)}_{m+1,1}$. Furthermore, the heights reached by $I^{(m)}_{m+1,1}$ remain connected to the heights attained by the sublevels $I^{(m)}_1,...,I_{m}^{(m)}$, as explained in the previous paragraph. Therefore, to analyze the dynamics in the $(m+2)$-th step of the staircase configuration, it is sufficient to focus on $I^{(m)}_{1,1}$. Following this reasoning, we conclude that for any interval $I^{(m)}$, when considering its representation in the $(m+n)$-th step of the staircase configuration, the heights attained after several iterations, say $n$, coincide with the heights obtained by the set $${\phi_q(I^{(m)}_{1,...,1}), \dots, \phi_{nq}(I^{(m)}_{1,...,1})}.$$

From the previous paragraphs, we can draw the following conclusion. Let $I^{(m)}$ represent the final interval in the $m$-th step of the staircase configuration. Consider how this interval is represented in the $(m+n)$-th step of the staircase configuration. The heights reached by applying $\phi_q$ to $I^{(m)}$  $n$ times correspond to the heights achieved by the intervals $I^{(m+n)}_1, \phi_q(I^{(m+n)}_1), \dots, \phi_{nq}(I^{(m+n)}_1)$, where $I^{(m+n)}_1$ is the lowest level in the representation of $I^{(m)}$ within the $(m+n)$-th step of the staircase configuration, corresponding to $I^{(m)}_{1,...,1}$.\\

More precisely, if $k$ iterations of $\phi_q$ produce that $I^{(m)}$ lies in the $(m+n)$-th step of the staircase configuration, $I^{(m)}$ will be broken into several subintervals $I^{(m)}_J$ where $J$ is an index of length bounded by $(m+n-1)!/m!$, for one of such subintervals $I^{(m)}_{a_1,...,a_j}$ it will hold true that the height attained by $\phi_{kq}(I^{(m)}_{a_1,...,a_j})$ will coincide with the height attained by $\phi_{(K-N)q}(I^{(m)}_{1,...,1})$ where $$N=\sum_{i=1}^j|1-a_i|.$$

As $k\to \infty$ the staircase where $I^{(m)}$ will lie is the $(m+n)$-th step of the staircase configuration with $n\to \infty$, then the number of different heights also diverges. This result hints that, in the limiting process of iterations, and due to the equidistribution of $\{nq \mod 1 : n \in \mathbb{N}\}$,  the heights obtained will also equidistribute. 
\begin{remark}
   Observe that, although the heights obtained are described by the sets given in the previous paragraphs, there are many repetitions of heights, meaning that multiple subintervals can share the same height.

\end{remark}

\begin{remark} \label{remark1}
  In this procedure, we described the heights that can be obtained by iterating $\phi_q$ in the classical staircase configuration. However, if $\phi_t$ is a flow constructed under \emph{any} mixing staircase $T$, the result remains nearly the same. Specifically, we obtain an increasing sequence of attainable heights for an interval $I^{(m)}$ as the flow $\phi_t$ approaches infinity.
\end{remark}

We would like to prove that $$\lim_{t\to\infty} \lambda(\phi_t(I^{(m)})\cap R)\geq\lambda(I^{(m)})\lambda(b(R))\lambda(h(R)).$$
\\

We already have the first part, for the second part we would like that the concentration of heights contained in the vertical interval spanned by $R$ does not depend on $I^{(m)}$ but only on $h(R)$. For this purpose we need a notion of equitistribution.\\

As noted in \ref{Behaviour}, when examining the images of a subinterval from any $m$-step configuration of the staircase under the flow $\phi_t$, the subinterval breaks into further segments. Due to the presence of spacers arranged in a staircase-like pattern, this process results in a sequence of different heights. Our objective is to study the distribution of these heights.\\

The distribution of heights behaves differently depending on whether the intervals lie 
beneath $1$ or beneath $q$. Specifically, if an interval lies under $1$, every iteration 
of $\phi_q$ changes its height. In contrast, if the interval lies under $q$, it is mapped 
to another level of the staircase while preserving its height. Moreover, blocks of spacer 
intervals all share the same height. Thus, in order to analyze the distribution of heights 
generated by iterations of $\phi_q$, it is essential to distinguish between intervals 
lying beneath $1$ and those lying beneath $q$. For the next lemma, we introduce a 
convenient notation.

\begin{definition}
    Define $I_{ns}$ as the set of $x\in [0,1]$ such that $x$ is beneath 1, and define $I_s$ as the set of $x\in [0,1]$ such that $x$ is beneath $q$.
\end{definition}

\begin{definition}\label{Ht}
    Let $t \in \mathbb{R}$, and let $I,J$ be horizontal intervals, while $[a,b] \subseteq [0,1)$ 
    (or $[0,q)$, depending on whether we are in the non-spacer or spacer part) is a vertical interval. 
    We define the \emph{set of heights} as follows:
    \[
    H_t^{ns}(I,J,[a,b]) = 
    \left\{ z \in [a,b] \;:\; 
    \exists\, y \in I \text{ such that } 
    \pi_y(\phi_t(y)) = z \ \text{and}\ 
    \pi_x(\phi_t(y)) \in J \right\},
    \]
    when $J$ is contained in the non-spacer part. Similarly, we set
    \[
    H_t^{s}(I,J,[a,b]) = 
    \left\{ z \in [a,b] \;:\; 
    \exists\, y \in I \text{ such that } 
    \pi_y(\phi_t(y)) = z \ \text{and}\ 
    \pi_x(\phi_t(y)) \in J \right\},
    \]
    when $J$ is contained in the spacer part.
\end{definition}

  \begin{definition} \label{Hola}

         Let $I^{(m)}$ be a level in the $m$-step of the staircase configuration. We say that $I^{(m)}$ is $H$-equidistributed if, for any interval $[a,b] \subseteq [0,1]$ in the vertical direction,

 $$\lim_{t\to\infty} \frac{\textbf{card}(H_t^{ns}(I^{(m)},I_{ns},[a,b]))}{\textbf{card}((H_t^{ns}(I^{(m)},I_{ns},[0,1]))}=b-a,$$
and if, for any $[a,b] \subseteq [0,q)$,
 $$\lim_{t\to\infty} \frac{\textbf{card}(H_t^{s}(I^{(m)},I_s,[a,b]))}{\textbf{card}((H_t^{ns}(I^{(m)},I_s,[0,q]))}=\frac{b-a}{q}.$$
        
  \end{definition}

Achieving \( H \)-equidistribution in the spacer part is not straightforward. Although iterating \( \phi_q \) yields an increasing sequence of heights, understanding the precise distribution of these heights in the spacer part is crucial. While this might initially appear to obstruct \( H \)-equidistribution, the following result shows that the structured nature of these repetitions ensures that \( H \)-equidistribution is still achieved.

\begin{remark} \label{remark correspondence}
As noted in \ref{Behaviour}, the heights attained by $\phi_{kq}(I^{(m)})$ are determined by the heights of the iterates  
\[
\phi_{kq}(I^{(m)}_{1,\dots,1}), \quad \phi_{(k-1)q}(I^{(m)}_{1,\dots,1}), \quad \dots, \quad \phi_{q}(I^{(m)}_{1,\dots,1}),
\]
where the length of the sequence $1,\dots,1$ reflects the number of times $I^{(m)}$ has passed through the endpoint of different steps in the staircase configuration.\\

Among these iterations, there exist specific times $n^s_1, \dots, n^s_i$ at which $\phi_{n^s_l q}(I^{(m)}_{1,\dots,1})$ falls into a spacer interval, and times $n_1^{ns}, \dots, n_j^{ns}$ at which $\phi_{n_j^{ns} q}(I^{(m)}_{1,\dots,1})$ falls into a non-spacer interval.\\

From the observations in \ref{Behaviour}, we note that the heights attained by subintervals of $I^{(m)}$ after multiple iterations of $\phi_q$ are determined by those achieved by $I^{(m)}_{1,\dots,1}$. Moreover, applying $\phi_q$ to a spacer interval preserves its height, whereas applying it to a non-spacer interval changes its height. This leads to the following conclusions:\\

\begin{itemize}
    \item For each time $n_l^s$, there is an associated height $h_l^s$ such that these heights correspond to subintervals of $I^{(m)}$ that, after $k$ iterations of $\phi_q$, lie in cylinders whose base is a spacer interval.
    \item Similarly, for each time $n_l^{ns}$, there is an associated height $h_l^{ns}$ such that these heights correspond to subintervals of $I^{(m)}$ that, after $k$ iterations of $\phi_q$, lie in cylinders whose base is a non-spacer interval.
\end{itemize}

The heights $h_l^s$ and $h_l^{ns}$ correspond to the values attained by $\phi_{n_l^sq}(I^{(m)}_{1,\dots,1})$ and $\phi_{n_l^{ns}q}(I^{(m)}_{1,\dots,1})$. Since the heights of the iterates of $\phi_q$ for $I^{(m)}_{1,\dots,1}$ remain unchanged while passing through spacer blocks, multiple times $n_l^s$ can correspond to a single height. In contrast, for the times $n_l^{ns}$, the mapping to heights is injective.
\end{remark}

\begin{proposition}\label{propomiau}
    For any subinterval $I^{(m)}$ is $H-$equidistributed.   
\end{proposition}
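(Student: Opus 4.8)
The plan is to reduce the two–sided (spacer / non–spacer) equidistribution of Definition \ref{Hola} to a single one–dimensional statement about the rotation sequence $\{nq \bmod 1\}$, exploiting the reduction to the lowest subcolumn carried out in Section \ref{Behaviour}. Throughout I keep the normalization $(p,q)=(1,q)$ with $q$ irrational, and I discretize the flow by the iterates of $\phi_q$. By the correspondence recorded in Remark \ref{remark correspondence}, the multiset of heights realized by the many horizontal segments of $\phi_{kq}(I^{(m)})$ at a single large time $kq$ coincides with the multiset of heights realized by the \emph{temporal} orbit $\{\phi_{nq}(I^{(m)}_{1,\dots,1})\}_{1\le n\le k}$ of the first subcolumn; for a general time $t$ I sandwich $t$ between two consecutive multiples of $q$, the finitely many segments created in the intervening time of length $<q$ contributing a vanishing proportion as $t\to\infty$. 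Hence it suffices to understand how the heights $y_n:=\pi_y(\phi_{nq}(I^{(m)}_{1,\dots,1}))$ distribute.

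First I would record the height recursion underlying the whole argument, already described in Section \ref{Behaviour}: on a step where the current point lies beneath $1$ one has $y_{n+1}=(y_n+q)\bmod 1$, whereas on a step where it lies beneath $q$ the height is frozen, $y_{n+1}=y_n$. Writing $N(n)$ for the number of non-spacer steps among the first $n$, this gives $y_n=(y_0+N(n)q)\bmod 1$, with $N(n)\to\infty$ because the non-spacer region has positive measure. For the non-spacer statement, Remark \ref{remark correspondence} tells us the non-spacer heights are attained injectively, and by the recursion they are exactly the values $\{jq \bmod 1 : 1\le j\le N(k)\}$. Since $q$ is irrational these $z$-values are pairwise distinct, so the cardinalities in Definition \ref{Ht} count them without repetition, and the ratio in question equals the proportion of $\{jq\bmod 1\}_{j\le N(k)}$ lying in $[a,b]$; the equidistribution theorem forces this proportion to $b-a$, which is the first limit.

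The spacer statement is the genuine obstacle, for the reason flagged before Remark \ref{remark correspondence}: on spacer steps the height is frozen, so a block of consecutive times $n_1^s,\dots,n_i^s$ produces a single height $h^s$, and the map from times to heights is far from injective. The saving observation is that Definition \ref{Ht} counts the \emph{set} of attained heights, so each frozen block contributes its common height exactly once; together with the irrationality of $q$ (which keeps all values $jq\bmod 1$ distinct) this means the distinct spacer heights form a subcollection of $\{jq\bmod 1\}\cap[0,q)$. I would then show, using the self-similar spacer pattern $s_{n,i}=i-1$ of the staircase together with the correspondence of Remark \ref{remark correspondence}, that every rotation value falling in $[0,q)$ is eventually realized as the entry height of a spacer block, so that the distinct spacer heights seen up to time $kq$ exhaust $\{jq\bmod 1 : 1\le j\le M(k)\}\cap[0,q)$ for some $M(k)\to\infty$. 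Equidistribution of $\{jq\bmod 1\}$ in $[0,1)$, restricted to the subinterval $[0,q)$, then yields equidistribution of these heights in $[0,q)$, giving the ratio $(b-a)/q$ and finishing the proof.

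I expect the decisive difficulty to be exactly this last point: proving that the frozen spacer heights constitute an \emph{unbiased} sample of $[0,q)$ rather than a thinned, non-equidistributed subsequence. This is where one must invoke the precise geometry of the staircase — that the post-wrap height lands in $[0,q)$ precisely when the orbit is fed into the spacer region, and that the block lengths are organized by the triangular spacer rule — rather than any soft property of $\{nq\bmod 1\}$, since an arbitrary subsequence of an equidistributed sequence need not itself be equidistributed.
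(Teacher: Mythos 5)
Your reduction to the lowest subcolumn, your discretization by $\phi_q$, and your treatment of the non-spacer case are sound and essentially identical to the paper's argument: the heights attained over the non-spacer region form an initial segment of $\{jq \bmod 1\}$, attained injectively, and the equidistribution theorem gives the limit $b-a$.

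The spacer case, however, contains a genuine gap, and it sits exactly at the point you yourself flagged as the decisive difficulty. Your proposed resolution --- that the distinct spacer heights eventually exhaust \emph{all} rotation values $\{jq \bmod 1 : 1\le j\le M(k)\}\cap[0,q)$ --- is false for the classical staircase. By construction, each step's tower is the previous tower with spacer blocks appended, and the tower begins with two non-spacer levels; hence the level pattern reads (non-spacer, non-spacer, spacer block, non-spacer, non-spacer, spacer block, $\dots$), so the orbit always crosses \emph{exactly two} levels beneath the roof $1$ between consecutive spacer blocks. If $g$ denotes the first return map of the rotation by $q$ to $[0,q)$, the successive returns of the height to $[0,q)$ therefore alternate: the odd iterates $g^{2i+1}$ are the heights at which the orbit lands on the \emph{second} non-spacer level of a pair, while only the even iterates $g^{2i}$ are entry heights of spacer blocks. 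The spacer heights are thus a thinned subsequence consisting of every other return, and the odd iterates, although they lie in $[0,q)$, are never realized over a spacer; since $g$ is invertible and aperiodic, this deficiency persists for all time, so no choice of $M(k)$ makes your exhaustion claim true. Consequently restricted equidistribution of $\{jq\bmod 1\}$ cannot be invoked, and your own warning about thinned subsequences applies to your own argument. The repair is the route the paper takes: identify the spacer entry heights as the orbit of the \emph{second} return map $g^{2}$, which, being a power of the induced map of an irrational rotation on an interval, is itself an irrational rotation of $[0,q)$, hence uniquely ergodic, so its orbits equidistribute in $[0,q)$; the paper then refines this by sorting spacer blocks by size (blocks of a fixed size recur along arithmetic progressions in the block enumeration), applying unique ergodicity of the powers $g^{2a_n}$ to each class, and averaging with the weights $P_n$.
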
     
\begin{proof}

First, consider the non-spacer case. To establish the claim, we assume that \( I^{(m)} \) is initially at the uppermost level in the \( m \)-step configuration of the staircase transformation before breaking into \( m+1 \) new subintervals. If this is not the case, we can shift the interval using the flow until it reaches this uppermost level. This adjustment preserves the connectedness of the interval, ensuring that during the shifting process, it attains only one height.\\

Using the ideas developed in \ref{Behaviour}, we observe that the heights attained by \( \phi_{kq}(I^{(m)}) \) coincide with those attained by \( I^{(m)}_{1,\dots,1} \). Since the heights of \( I^{(m)}_{1,\dots,1} \) remain unchanged while passing through spacer blocks, it follows that all the heights achieved by \( I^{(m)}_{1,\dots,1} \) after multiple iterations of \( \phi_q \) are found in the non-spacer side while in the spacer we find only a subsequence of such heights. From the observations in \ref{remark correspondence}, we conclude that these heights are attained by subintervals that, after multiple iterations, lie in cylinders whose base is non-spacer. The equidistribution of \( nq \mod 1 \) then establishes the first part of the proof.\\

For the second part of the proof, we analyze the heights attained by the spacer subintervals, which correspond to the blocks of spacer levels in the staircase configuration. First, observe that, by \ref{remark correspondence}, the heights of the intervals of \( I^{(m)} \) that, after iterations of \( \phi_q \), lie in cylinders whose base is a spacer interval correspond to the heights \( h^{ns}_l \). Thus, it suffices to study the distribution of heights achieved by \( I^{(m)}_{1,\dots,1} \) after multiple iterations. To achieve this, we construct two sequences as follows:\\

Let $m_1$ be the smallest integer such that 
$(m_1 - 1)q < 1 < m_1 q$. Note that\\ $m_1 q \mod 1 < q$. Let $k_1'$ be the smallest integer such that $$(k_1' - 1)(m_1 q \mod 1) < 1 < k_1'(m_1 q \mod 1),$$ and define $k_1 = k_1' m_1$. Also, $k_1 q \mod 1 < q$.\\

Next, let $m_2'$ be the smallest integer such that $$(m_2' - 1)(k_1 q \mod 1) < 1 < m_2' (k_1 q \mod 1),$$ and define $m_2 = m_2' k_1$.\\

Then, let $k_2'$ be the smallest integer such that $$(k_2' - 1)(m_2 q \mod 1) < 1 < k_2'(m_2 q \mod 1),$$ and define $k_2 = k_2' m_2$.\\

In general, suppose $k_n$ is defined. Let $m_{n+1}'$ be the smallest integer such that $$(m_{n+1}' - 1)(k_n q \mod 1) < 1 < m_{n+1}' (k_n q \mod 1),$$ and define $m_{n+1} = m_{n+1}' k_n$.\\

Let $k_{n+1}'$ be the smallest integer such that $$(k_{n+1}' - 1)(m_{n+1} q \mod 1) < 1 < k_{n+1}'(m_{n+1} q \mod 1),$$ and define $k_{n+1} = k_{n+1}' m_{n+1}$.\\

It is clear that both sequences satisfy $k_n q \mod 1 < q$ and $m_n q \mod 1 < q$. Moreover, the sequence $k_n q \mod 1$ describes the heights attained by the spacer intervals under the iterations of $\phi_q$.\\

From this, we can convince ourselves that if
$a_n$ is the sequence such that\\
$a_n q \mod 1 < q$, then since $\{nq \mod 1\}$ equidistributes in $[0,1)$, it follows that since $\{a_n q \mod 1\}$ describes the orbit of $0$ of the first return map of the irrational rotation by $q$ to $[0,q)$, this first return map $g$ is again an irrational rotation, in particular all its iterates  $g^k$ are uniquely ergodic, therefore its orbits equidistribute in $[0,q)$. \\

From the construction of the sequence $k_i$, we can observe that $k_i = a_{2i}$, since the odd-indexed terms correspond to $m_i$. In other words, the sequence $k_nq \mod 1$ describes the orbit of the second return map to $[0,q)$, which is $g^2$ in the last paragraph. In particular, it is uniquely ergodic, and the orbit of zero will equidistribute. Consequently,
$$
\lim_{n \to \infty} \frac{\textbf{card} \left(\{ m \leq n \mid k_m q \mod 1 \in [a,b] \} \right)}{n} =\frac{b-a}{q}
$$
for every \( [a,b] \subseteq [0,q) \). \\

As mentioned, the analysis in the spacer case is complicated since multiple intervals can share the same height. According to last paragraph, the sequence $k_n q \mod 1$ describes the heights attained by the spacer intervals. In the $m$-th step of the staircase configuration, spacers appear in blocks, following \ref{Behaviour} if we apply sufficiently many times $\phi_q$ in such a way that $I^{(m)}_1$ goes  its way through one $m(q)$-th  step of the staircase configuration,  all intervals within a block of spacers share the same height. Rather than considering individual intervals, we focus on these blocks as a whole. \\

Arrange the blocks of spacers by order or appearance upwards in a staircase configuration. 
Consider an ennumeration $\{d(l)\}_{l\in \mathbb{N}}$ of that appearance such that $d(l)$ describes the number of intervals that  the $l$-th spacer block in order of appearance has. For any $n$, we examine the sequence of blocks containing $n$ intervals. It can be observed that the appearance of these blocks follows a linear pattern. Specifically, for each $n$, there exists  values $a_n$ and $b_n$ such that for any $l$, the  block of spacers $d(b_n+a_n l)$ contains exactly $n$ intervals. Consequently, for each $n$, the sequence $n_l = k_{b_n+a_n l}$ is such that the set $\{n_l q \mod 1 : l \in \mathbb{N}\}$ accurately describes the heights of spacer intervals that appear in blocks of $n$ elements. \\

    We can give a description of the sequences $n_j$. For the case $n=1,2$ the sequences are given by $1_j=1+3j$ and $2_j=2+3j$ respectively. \\
    
    Let $n$ be any other number.
    Consider $m(n)$ the first step in the staircase configuration such that there exists a block of spacers with $n$ intervals. For $m(n)$ there will be $m(n)-1$ new numbers $c_i$ such that there exists a block with $c_i$ spacer intervals such that these numbers did not appear in the $(m(n)-1)$-th step of the staircase configuration. Order them from bottom to top. Let $F(n)$ be the first time $n$ appears in the list of blocks. If $n$ is not $c_{m(n)-1}$ then $$n_j=F(n)+ j\frac{m(n)!}{2},$$ and if $n$ is $c_{m(n)-1}$ then $$n_j=F(n)+j\frac{(m(n)+1)!}{2}.$$

Continuing with the proof; just as before,  using the fact that the first return map of a rotation by $q $ to the subinterval $[0,q)$ is totally ergodic, and observing that for any $n$ the sequence $n_jq\mod 1$ is the orbit of $g^{2b_n}(0)$ under the transformation $g^{2a_n}$ we see that the sequence of heights attained by blocks with $n$ intervals equidistribute in $[0,q)$.\\

Define $P_n$ as the probability of finding a block with $n$ elements. Then,
$$\lim_{n\to\infty} \frac{\textbf{card}(H_{nq}^{s}(I^{(m)}, I_{s,} [a,b]))}{\textbf{card}(H_{nq}^{s}(I^{(m)}, I_{s},[0,q)])}=\sum_{n\in\mathbb{N}}\left(\frac{b-a}{q}\right)P_n=\frac{b-a}{q}.$$

Observe that $H$-equidistribution for $t\to\infty$ follows directly from the $H$-equidistribution when $nq\to\infty$,  since for large $t$ the variation when considered the discrete and continuous version converges to zero.
The proposition follows.
\end{proof}

\begin{remark}
    Observe that $H$-equidistribution is not enough for approximating the measure $\lambda(\phi_t(I^{(m)}\cap R)$,  since there might be a \textit{diagonal} behavior, see Figure \ref{diagonal}. Nevertheless if we prove something similar, but instead of considering $J=I_{ns}$ or $J=I_s$ to be an arbitrary interval of the staircase configuration $I^{(n)}$, and prove equidistribution on the heights in those cylinders, then the approximation of  $\lambda(\phi_t(I^{(m)}\cap R)$ is achievable.
\end{remark}
\begin{figure}
    
\begin{tikzpicture}[scale=.5]
   
    \draw[thick, black] (0,0) -- (8,0) -- (8,3) -- (3,3) -- (3,5) -- (0,5) -- cycle;

    \foreach \i in {1,...,30} {
        \draw[red, thick] (\i/10, 5-\i*5/30) -- (\i/10 + 0.1, 5-\i*5/30);
    }

    \foreach \i in {1,...,60} {
        \draw[red, thick] (3+\i*5/60, 3-\i/20) -- (3+\i*5/60 + 0.1, 3-\i/20);
    }
\draw[blue,thick] (1,1)--(1.5,1)--(1.5,1.5)--(1,1.5)-- cycle;
\end{tikzpicture}
\centering
   
    \caption{A diagonal behavior causes that the intervals miss several rectangles.}
    \label{diagonal}
\end{figure}
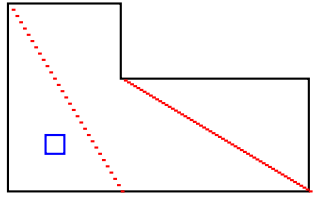
\begin{definition}
     Let $I^{(m)}$ be a level in the $m$-step of the staircase configuration. We say that $I^{(m)}$ is $\overline{H}$-equidistributed if, for any interval $[a,b] \subseteq [0,1]$ in the vertical direction, and any interval of the staircase configuration $I^{(n)}$

 $$\lim_{t\to\infty} \frac{\textbf{card}(H_t^{ns}(I^{(m)},I^{n},[a,b]))}{\textbf{card}((H_t^{ns}(I^{(m)},I^{n},[0,1]))}=b-a,$$
and if, for any $[a,b] \subseteq [0,q)$,
 $$\lim_{t\to\infty} \frac{\textbf{card}(H_t^{s}(I^{(m)},I^{(n)},[a,b]))}{\textbf{card}((H_t^{ns}(I^{(m)},I^{(n)},[0,q]))}=\frac{b-a}{q}.$$
\end{definition}
\begin{lemma}
    \label{H2equidistribution}
    For any $I^{(m)}$ it is $\overline{H}$-equidistributed.
\end{lemma}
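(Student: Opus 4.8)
The plan is to bootstrap $\overline{H}$-equidistribution from the $H$-equidistribution already established in Proposition~\ref{propomiau}. The key difference is that $H$-equidistribution records where the heights land relative to the \emph{entire} non-spacer set $I_{ns}$ (or the entire spacer set $I_s$), whereas $\overline{H}$-equidistribution demands the same equidistribution statement when we restrict the target horizontal interval to a \emph{single} level $I^{(n)}$ of the staircase configuration. The obstruction this rules out is the ``diagonal behaviour'' of Figure~\ref{diagonal}: even if the collection of all heights equidistributes vertically, the heights could conceivably be \emph{correlated} with the horizontal position, so that a fixed column $I^{(n)}$ only ever sees heights in a thin band. The proof must show that no such correlation survives in the limit.

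First I would fix a level $I^{(n)}$ of the configuration and analyse, via the mechanism of Section~\ref{Behaviour} and Remark~\ref{remark correspondence}, exactly which iterates $\phi_{kq}(I^{(m)}_{1,\dots,1})$ land with their horizontal projection inside $I^{(n)}$. As in the proof of Proposition~\ref{propomiau}, the horizontal destination of an iterate is governed by the combinatorics of the staircase (how many times $I^{(m)}_{1,\dots,1}$ has passed through the endpoints of successive steps), while the vertical height is governed by $kq \bmod 1$. The decisive structural fact is that these two data are \emph{decoupled in the limit}: the set of iteration times $k$ whose horizontal image falls in a prescribed level $I^{(n)}$ is, up to the same linear/arithmetic-progression description used for the spacer blocks in Proposition~\ref{propomiau}, an arithmetic progression $\{F + j\,\Delta\}$ (or a finite union of such). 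Restricting the height sequence $kq \bmod 1$ to such a progression yields $(F+j\Delta)q \bmod 1$, which is again the orbit of a point under a rotation by $\Delta q$, and by the same total-ergodicity argument (every nonzero power of an irrational rotation, and the induced first-return map to $[0,q)$, is uniquely ergodic) this subsequence equidistributes in $[0,1)$ (respectively in $[0,q)$ in the spacer case). Hence conditioning on the horizontal column $I^{(n)}$ does not bias the vertical distribution.

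Concretely, the steps in order would be: (i) decompose $I^{(m)}$ into the first-subcolumn representative $I^{(m)}_{1,\dots,1}$ as in Section~\ref{Behaviour}, so that all heights are read off from the single orbit $\{\phi_{kq}(I^{(m)}_{1,\dots,1})\}$; (ii) for the non-spacer case, identify the iteration times whose horizontal image lands in $I^{(n)}\subseteq I_{ns}$ and show these form a (finite union of) arithmetic progression(s) in $k$, exactly as the block-enumeration $\{n_j\}$ was produced in Proposition~\ref{propomiau}; (iii) apply the total ergodicity of the rotation by $q$ to conclude that $(F+j\Delta)q \bmod 1$ equidistributes in $[0,1)$, giving the first displayed limit; (iv) repeat for the spacer case, combining the block structure of the spacers (whose heights are constant across a block, described by $k_n q \bmod 1$) with the column restriction, and average over block sizes weighted by $P_n$ as in the final computation of Proposition~\ref{propomiau} to obtain the factor $(b-a)/q$; (v) upgrade the discrete statement ($nq\to\infty$) to the continuous one ($t\to\infty$), observing as before that the difference between the discrete and continuous versions vanishes in the limit.

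The main obstacle I expect is step (ii): establishing that the set of times $k$ for which $\phi_{kq}(I^{(m)}_{1,\dots,1})$ lands horizontally in a \emph{specific} level $I^{(n)}$ really is arithmetic-progression-structured, rather than some irregular set for which unique ergodicity of a single rotation would not directly apply. This is precisely the content that the diagonal pathology of Figure~\ref{diagonal} threatens, and it is where the self-similar, periodic-in-appearance structure of the staircase columns (the same structure that gave the explicit sequences $n_j = F(n) + j\,\tfrac{m(n)!}{2}$ for spacer blocks) must be leveraged carefully. Once the horizontal-landing times are pinned down as progressions, the equidistribution is a direct consequence of the results already in hand, and $\overline{H}$-equidistribution follows.
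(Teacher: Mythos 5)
Your reduction to the leading interval $I^{(m)}_{1,\dots,1}$, the spacer-case averaging over block sizes weighted by $P_n$, and the discrete-to-continuous upgrade all match the paper's proof, but your step (ii) --- the claim that the iteration times $k$ for which $\phi_{kq}(I^{(m)}_{1,\dots,1})$ lands horizontally in a fixed level $I^{(n)}$ form an arithmetic progression, or a finite union of such --- is false, and it is the load-bearing step of your argument. A finite union of infinite arithmetic progressions has bounded gaps, whereas the gaps between successive visits of the leading interval to $I^{(n)}$ are unbounded: in the $(n+k)$-th step of the configuration the copies of the step-$n$ column are separated by spacer blocks whose sizes grow without bound, so the visit times are separated by ever larger excursions. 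This quadratic, non-arithmetic accumulation of spacers is precisely the mechanism that makes the staircase $T$ mixing in the first place; if the return structure to a level were arithmetic, the rotation-power argument you invoke would be available but the underlying transformation would not be mixing. Your appeal to Proposition~\ref{propomiau} conflates two different indexings: there, the arithmetic progressions $n_j = F(n) + j\,m(n)!/2$ live in the index of the \emph{spatial enumeration of spacer blocks} going up the column (which collapses the irregular gaps), not in dynamical time, and the composition with heights works because the $l$-th block's height is itself given by an orbit of the return map $g$. No analogous time-indexed statement holds for a fixed level $I^{(n)}$.

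The paper's proof of Lemma~\ref{H2equidistribution} avoids this by working at a \emph{fixed large time} $Kq$ rather than along the orbit of a single piece: at time $Kq$ many subintervals of $I^{(m)}$ lie simultaneously in the $I^{(n)}$-cylinder, and by the delay structure of \ref{Behaviour} their heights are $\pi_y(\phi_{(K-N)q}(I^{(m)}_{1,1}))$ where the delays $N$ run over longer and longer ranges of \emph{consecutive} integers (organized via ``leading'' subsubintervals in each subcylinder $I^{(n)}_{J'}$). Thus the heights present in the cylinder form blocks $kq, (k+1)q, \dots, (k+r)q \bmod 1$ with $r\to\infty$, and equidistribution follows from the Weyl equidistribution of long consecutive segments of $\{jq \bmod 1\}$, combined with Lemma~\ref{lemma4.2} for the horizontal measure and with the subsequence analysis of Proposition~\ref{propomiau} for the spacer case. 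So the correct replacement for your step (ii) is not an arithmetic-progression structure of landing \emph{times}, but a consecutive-run structure of \emph{delays} among the pieces present at one time; without that substitution your argument does not close the diagonal pathology of Figure~\ref{diagonal} that the lemma is designed to rule out.
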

\begin{proof}

Using the \( H \)-equidistribution stated in Proposition \ref{propomiau} and Lemma \ref{lemma4.2}, we establish that for any intervals \( I^{(m)} \) and \( I^{(n)} \), the heights attained by \( \phi_t(I^{(m)}) \) become equidistributed within the \( I^{(n)} \)-cylinder.  \\

We begin by analyzing the case where \( I^{(n)} \) lies in the non-spacer region. Without loss of generality, we assume \( m \geq n \). Considering a discretization of the flow given by iterates of \( q \), the evolution of \( \phi_{kq}(I^{(m)}) \) through the staircase causes it to break into subintervals. Over multiple iterations, some of these subintervals return to the \mbox{\( I^{(n)} \)-cylinder} due to the staircase-like structure. This implies the existence of subintervals \( I^{(m)}_{h_1}, \dots, I^{(m)}_{h_j} \) such that every \( \phi_{kq}(I^{(m)}_{h_i}) \)  lies within the \( I^{(n)} \)-cylinder. \\

These subintervals, in turn, belong to specific subcylinders of \( I^{(n)} \), denoted by \( I^{(n)}_{g_1}, \dots, I^{(n)}_{g_j} \). Among the different heights achieved by these subintervals, there exists a \textit{leading} interval, meaning that if the heights are given by \( n_iq \mod 1 \), one interval attains the maximum height $n_{i_0}q \mod 1$. More precisely, we can label the subintervals as \( I^{(m)}_1, \dots, I^{(m)}_j \) such that \( I_1^{(m)} = I^{(m)}_{i_0} \), and the height of each \( I_l^{(m)} \) coincides with that of \( \phi_{-N_{1,l}q}(I^{(m)}_1) \), where \( N_{1,l} \) represents the distance between \( l \) and \( 1 \) in this enumeration.
\\

By iterating this process, we observe that if $\phi_{Kq}$ causes $I^{(m)}$ to reach the the \mbox{\((m+d)\)-th} step of the staircase configuration, an even smaller portion of \( I^{(m)} \) will lie within the \mbox{\( I^{(n)} \)-cylinder}. More precisely, there exist subintervals \( I^{(m)}_{J_1}, \dots, I^{(m)}_{J_k} \), where each index \( J_i \) has length bounded by \( \frac{(m+d-1)!}{m!} \).  

Similarly, there are subcylinders over \( I^{(n)}_{J'_1}, \dots, I^{(n)}_{J'_k} \), where each \( J'_i \) has length bounded by \( \frac{(n+d-1)!}{n!} \). For each \( J_i \), there exist subsubintervals \( I^{(m)}_{J_i,i(1)}, \dots, I^{(m)}_{J_i,i(p)} \) that lie within some subcylinder \( I^{(n)}_{J'_i} \). Among these subsubintervals, one achieves the greatest height within the subcylinder, which we denote as the \textit{leading} subsubinterval \( I^{(m)}_{J_i,i(j_0)} \). \\

Following the reasoning from the previous paragraph, we can enumerate these subsubintervals as \( I^{(m)}_{J_i,1}, \dots, I^{(m)}_{J_i,p} \) so that \( I^{(m)}_{J_i,1} = I^{(m)}_{J_i,i(j_0)} \). The heights achieved by these subsubintervals satisfy  
\[
\pi_y(\phi_{Kq(} I^{(m)}_{J_i,l})) = \pi_y(\phi_{(K-N_{1,l})q}(I^{(m)}_{J_i,1})),
\]
where \( N_{1,l} \) denotes the distance between \( l \) and \( 1 \) in this enumeration.

Next, considering the heights of the leading subsubintervals across different subcylinders \( I^{(m)}_{J_i,1} \), we apply the previous argument. If multiple subsubintervals attain the maximum height, we order them from left to right according to their appearance in the subcylinders of \( I^{(n)} \). We then relabel the leftmost interval with the leading height as \( I^{(m)}_{1,1} \) and establish an enumeration such that the height of \( I^{(m)}_{j,1} \) satisfies  
\[
\pi_y(\phi_{Kq}(I^{(m)}_{j,1})) = \pi_y(\phi_{(K-N_{1,j})q}(I^{(m)}_{1,1})),
\]
where \( N_{1,j} \) represents the distance between \( j \) and \( 1 \). Extending this enumeration within each subcylinder, we ensure that for each subsubinterval \( I^{(m)}_{i,j} \),  
\[
\pi_y(\phi_{Kq}(I^{(m)}_{i,j})) = \pi_y(\phi_{(K-N_{1,j}+ N_{1,i})q}(I^{(m)}_{1,1})).
\]
See Figure \ref{cylinder} for a toy model illustrating this behavior.\\

With further iterations of \( \phi_q \), the measure of the portion of \( I^{(m)} \) that lies within the \( I^{(n)} \)-cylinder converges to \( \lambda(I^{(m)}) \lambda(I^{(n)}) \), as established in Lemma~\ref{lemma4.2}. Moreover, these portions become equidistributed among the subcylinders \( I^{(n)}_J \), where \( J \) is an index of a specified length that depends on the number of iterations of \( \phi_q \). The heights attained within these subintervals follow the same pattern as before, forming an increasing sequence of the form  
\[
kq, (k+1)q, \dots, (k+r)q \mod 1.
\]  
As \( q \to \infty \), the length of these sequences increases, leading to the equidistribution of heights within the cylinder generated by \( I^{(n)} \).\\

If \( I^{(n)} \) is a spacer interval, we observe that, due to the staircase-like pattern, the behavior is analogous to the non-spacer case. After multiple iterations of \( \phi_q \), the subintervals forming \( I^{(m)} \) distribute across subcylinders of \( I^{(n)} \), attaining different heights. The only distinction from the non-spacer case is that these heights do not differ by one iteration of $q \mod 1$ as in the non-spacer case, rather they follow  the subsequence described in Proposition \ref{propomiau}, which as proved there, ensures their equidistribution within the \( I^{(n)} \)-cylinder.

\end{proof}
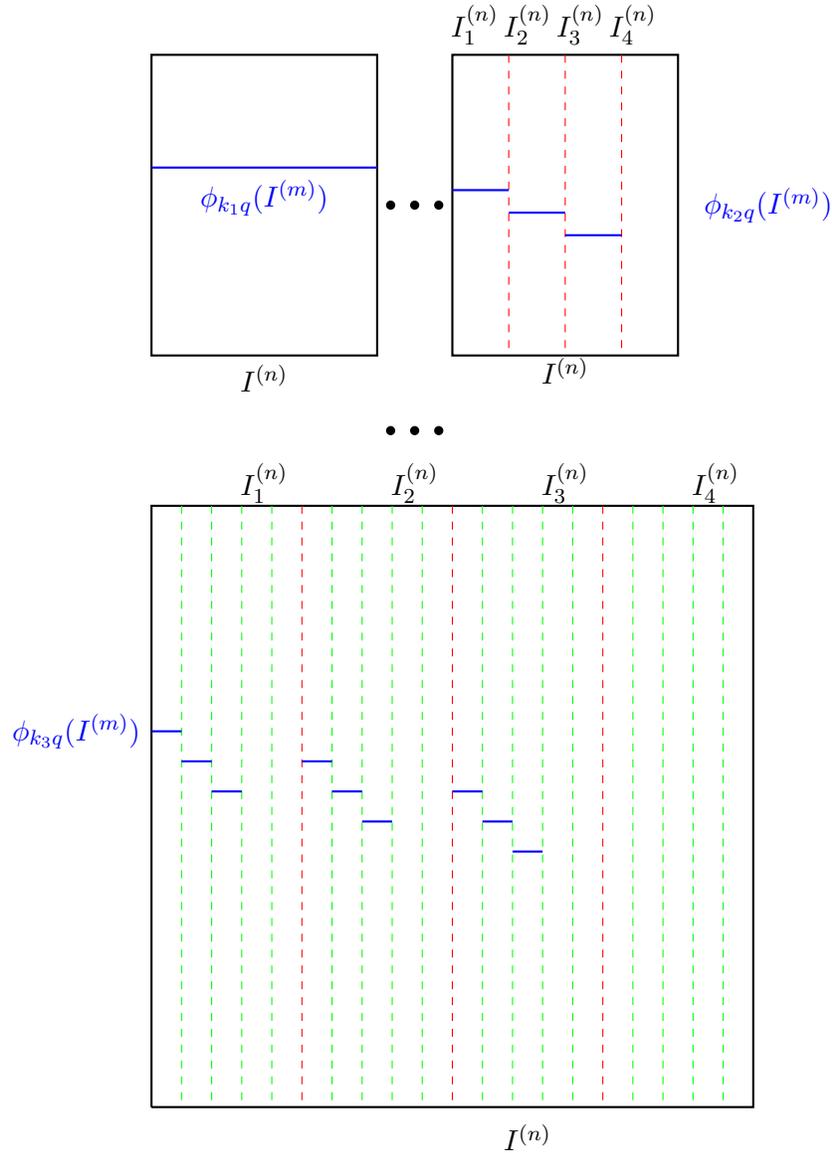
\begin{figure}
    \centering

\begin{tikzpicture}
    \draw[black, thick] (0,0)--(3,0)--(3,4)--(0,4)-- cycle;
    \node[black] at (1.5,-.3) {$I^{(n)}$};
    \draw[blue, thick] (0,2.5)--(3,2.5);
 \node[blue] at (1.5,2.1) {$\phi_{k_1q}(I^{(m)})$};
    \draw[black, thick] (4,0)--(7,0)--(7,4)--(4,4)-- cycle;
    \node[black] at (5.5,-.2) {$I^{(n)}$};
    \draw[red, dashed] (4.75,4)--(4.75,0);
    \draw[red, dashed] (5.5,4)--(5.5,0);
    \draw[red, dashed] (6.25,4)--(6.25,0);
    \draw[blue, thick] (4,2.2)--(4.75,2.2);
     \draw[blue, thick] (4.75,1.9)--(5.5,1.9);
      \draw[blue, thick] (5.5,1.6)--(6.25,1.6);
      \node[scale=3] at (3.5,2) {...};
      
      \node[black] at (4.3,4.4) {$I^{(n)}_1$};
      \node[black] at (5,4.4) {$I^{(n)}_2$};
      \node[black] at (5.7,4.4) {$I^{(n)}_3$};
      \node[black] at (6.4,4.4) {$I^{(n)}_4$};
      \node[scale=3] at (3.5,-1) {...};

    \draw[black, thick] (0,-10)--(8,-10)--(8,-2)--(0,-2)-- (0,-10);
    \foreach \i in {1,2,3} {
    \draw[red,dashed] (\i*2,-2)--(\i*2,-10);
    };
      \foreach \i in {0,1,2,3} {
      \foreach \j in {1,2,3,4} {
      \draw[green, dashed] (\i*2+\j*.4,-2)--(\i*2+\j*.4,-10);
      }
      };
      \foreach \i in {0,1,2}{

    \foreach \j in {0,1,2}{

\draw[blue, thick] (\i*2 +\j*.4,-5-\i*.4-\j*.4)--(\i*2+\j*.4+.4, -5-\i*.4-\j*.4);
    }
      }
      \node[black] at (5,-10.4) {$I^{(n)}$};

      \foreach \i in {1,2,3,4}{
      \node[black] at (-.5+\i*2,-1.7) {$I_{\i}^{(n)}$};
      }
     \node[blue] at (-1,-5) {$\phi_{k_3q}(I^{(m)})$};
     \node[blue] at (8.2,2) {$\phi_{k_2q}(I^{(m)})$};
\end{tikzpicture}
 \caption{Toy model of the behavior of $\phi_{nq}(I^{(m)})$ in the $I^{(n)}$-cylinder.}
    \label{cylinder}
\end{figure}
Now we can observe that Lemma  \ref{lemma4.2} and Lemma \ref{H2equidistribution} imply the following:
\begin{theorem} \label{teo4.14}
    Let $T$ represent the classical staircase transformation, and let $f$ be a $2$-wise constant function characterized by roof values $p$ and $q$ that are rationally independent. If all the spacers of the transformation $T$ are positioned beneath the roof with value $q$, then the flow constructed under $f$ respecting $T$ is strongly mixing.
\end{theorem}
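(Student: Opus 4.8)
The plan is to deduce Theorem \ref{teo4.14} directly from the verification hypotheses of Ulcigrai's mixing criterion (Lemma \ref{LemaUlci}), using the two structural results already established. Fix a rectangle $R = b(R) \times h(R)$ under the graph of $f$, and fix $\epsilon, \delta > 0$. First I would construct the partial partition $\eta$ purely from the staircase geometry: choosing $m$ large enough that every level $I^{(m)}$ in the $m$-th step of the configuration satisfies $\lambda(I^{(m)}) < \delta$ and $\lambda\bigl(\bigsqcup_{i} I_i^{(m)}\bigr) > 1 - \delta$, which is possible because the level lengths decrease to zero while their union exhausts $[0,1]$ under restricted growth. This $\eta$ depends only on $\delta$, not on $t$, so condition (1) of Lemma \ref{LemaUlci} is immediate and $Mesh(\eta) \leq \delta$ holds by construction. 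The substance is condition (2): for each $I = I^{(m)} \in \eta$ I must show $\lambda\bigl(I \cap \phi_{-t}(R)\bigr) \geq (1-\epsilon)\mu(R)\lambda(I)$ for all $t$ beyond some threshold.

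To establish condition (2), the key is to split the target measure into horizontal and vertical contributions and control each separately. Writing $b(R) = \bigcup_{i \in J} I^{(n_i)}$ as a disjoint union of staircase levels (any interval can be approximated this way, so by shrinking $R$ slightly and absorbing the error into the factor $(1-\epsilon)$ I reduce to this case), I would express
\[
\lambda\bigl(I^{(m)} \cap \phi_{-t}(R)\bigr) = \sum_{i \in J} \lambda\bigl(\phi_t(I^{(m)}) \cap (I^{(n_i)} \times h(R))\bigr).
\]
Lemma \ref{lemma4.2} controls the \emph{horizontal} spreading: for $m \geq m_0$ the total base-measure of $\phi_t(I^{(m)})$ landing over $\pi_x(R)$ converges to $\lambda(I^{(m)})\lambda(b(R))$ as $t \to \infty$. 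Lemma \ref{H2equidistribution} ($\overline{H}$-equidistribution) controls the \emph{vertical} distribution: within each cylinder over $I^{(n_i)}$, the heights attained by the horizontal segments of $\phi_t(I^{(m)})$ equidistribute, so the proportion of those segments whose height falls in $h(R)$ converges to $\lambda(h(R))$ (or $\lambda(h(R))/q$ in the spacer region, matching the normalization of $\mu$ there). Multiplying these two asymptotics and summing over $i \in J$ yields
\[
\lim_{t \to \infty} \lambda\bigl(I^{(m)} \cap \phi_{-t}(R)\bigr) = \lambda(I^{(m)})\,\lambda(b(R))\,\lambda(h(R)) = \mu(R)\,\lambda(I^{(m)}),
\]
which gives condition (2) with room to spare once $t$ exceeds a threshold $t_0$ depending on $\epsilon, \delta, R$.

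The main obstacle I anticipate is the \emph{uniformity} of the threshold $t_0$ across all levels $I \in \eta$ simultaneously, together with the fact that Lemma \ref{lemma4.2} and Lemma \ref{H2equidistribution} are stated as limits along a fixed level rather than as quantitative rates. Since $\eta$ is a \emph{finite} partition for each fixed $\delta$, I can take $t_0$ to be the maximum of the finitely many individual thresholds, which resolves the uniformity issue cleanly. A more delicate point is that the horizontal estimate and the vertical estimate must hold along the \emph{same} sequence of times; here the discretization by iterates of $\phi_q$ used in Lemma \ref{H2equidistribution} must be reconciled with the continuous parameter $t$, but as noted in the proof of Proposition \ref{propomiau}, the variation between the discrete and continuous pictures tends to zero for large $t$, so the continuous limit inherits the equidistribution. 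Finally, I would remark that the product structure of the limit is exactly what rules out the \emph{diagonal} pathology of Figure \ref{diagonal}: it is precisely the passage from $H$-equidistribution to $\overline{H}$-equidistribution (equidistribution of heights within each individual subcylinder $I^{(n_i)}$, not merely within the whole spacer/non-spacer region) that guarantees the horizontal and vertical measures factor, completing the verification of Lemma \ref{LemaUlci} and hence the mixing of $\phi_t$.
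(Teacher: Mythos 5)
Your proposal is correct and follows essentially the same route as the paper: verifying Ulcigrai's criterion (Lemma \ref{LemaUlci}) with partial partitions built from the $m$-step staircase levels, using Lemma \ref{lemma4.2} for the horizontal spreading and Lemma \ref{H2equidistribution} for the vertical equidistribution, then multiplying the two asymptotics. The only cosmetic difference is that the paper reduces to $b(R)$ being a single staircase level and treats the non-spacer, spacer, and mixed cases separately (obtaining a strict inequality with the extra factor $1/q$ in the spacer case, so your displayed equality should really be a ``$\geq$''), whereas you sum over a decomposition of $b(R)$ and handle the approximation error inside the $(1-\epsilon)$ factor; both yield the same bound.
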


\begin{proof}
    
    According to Lemma  \ref{LemaUlci} we need to prove that for any $\epsilon,\delta>0$ and for every rectangle $R$ there is a $t_0>0$ such that for every $t\geq t_0$ the exists a partition $\eta(t)$ such that:
    \\
    \begin{enumerate}
        \item $\lambda(\eta(t))>1-\delta$ and $Mesh(\eta(t))\leq \delta$.
        \\
        \item For every $I\in \eta(t)$ is follows that $\mu(I\cap \phi_{-t}(R))\geq (1-\epsilon)\mu(R)\lambda(I).$
    \end{enumerate}

    The partitions we construct depend on $\delta$, while $t_0$ is determined by $\epsilon$.\\

The natural partitions under consideration stem from the $m$-steps within the staircase configuration. Referring to \ref{lenght}, if $A$ denotes the length of $I^{(1)}$, then for any subinterval $I^{(m)}$ in the $m$-step, we have:
    
    \begin{equation}\label{yeroconvergence}
    \lambda(I^{(m)})=\frac{A}{(m)!}\to0.
     \end{equation}

   Moreover,  since all the intervals in this choice of partition have the same measure, then $Mesh$ of any of this partition will converge to zero in this way \ref{yeroconvergence}.\\

    On the other hand, we observe that the measure of the covers converges to 1 by construction. Therefore, there exists an $m$ such that $\lambda(I^{(m)})<\delta$ and simultaneously $h_m\lambda(I^{(m)})<1-\delta$ and also $m\geq m_0(q)$, where $h_m$ represents the number of intervals in the $m$-step of the transformation and $m_0(q)$ is the constant given in Lemma \ref{lemma4.2}. This satisfies the initial requirement.\\

  Without loss of generality we may assume that $b(R)$ is one interval $I^{(n)}$ of the $n$-th step of the staircase configuration for some $n$.  For the second requirement, we need to consider three cases:
\begin{enumerate}
    \item $R$ is inside the non-spacer part. Then
    $$\lambda(\phi_t(I^{(m)})\cap R)\to \lambda(\phi_t(I^{(m)})\cap \pi_x^{-1}(b(R)))\frac{\textbf{card}(H_t^{ns}(I^{(m)},I^{(n)},h(R)))}{\textbf{card}((H_t^{ns}(I^{(m)},I^{(n)},[0,1]))}.$$

    By Lemma \ref{lemma4.2},
    $$\lambda(\phi_t(I^{(m)})\cap \pi_x^{-1}(b(R)))\to \lambda(I^{(m)})\lambda(b(R)).$$

    And by Lemma \ref{H2equidistribution} we have that 
    $$\frac{\textbf{card}(H_t^{ns}(I^{(m)},I^{(n)},h(R)))}{\textbf{card}((H_t^{ns}(I^{(m)},I^{(n)},[0,1]))}\to\lambda(h(R)).$$

    This in particular implies that
    $$\lambda(I^{(m)}\cap \phi_{-t}(R))\to \lambda(I^{(m)})\lambda(b(R))\lambda(h(R))=\lambda(I^{(m)})\mu(R).$$

    \item $R$ is inside the spacer part of the polygon. Then by Lemma \ref{H2equidistribution},
    $$\frac{\textbf{card}(H_t^{ns}(I^{(m)},I^{(n)},h(R)))}{\textbf{card}((H_t^{ns}(I^{(m)},I^{(n)},[0,1]))}\to\frac{\lambda(h(R))}{q}.$$
    Then
    $$\lambda(I^{(m)}\cap \phi_{-t}(R))\to \frac{\lambda(I^{(m)})\mu(R)}{q}>\lambda(I^{(m)})\mu(R).$$

    \item $R$ has a part inside the spacer and non-spacer part. Let $R_1$ be the part inside the non-spacer part and $R_2$ be the part inside the spacer part. Observe that $h(R_1)=h(R_2)=h(R)$ and $b(R_1)+b(R_2)=b(R)$. Suppose that $b(R_1)=I^{n_1}$ and $b(R_2)=I^{(n_2)}$ By Lemma \ref{lemma4.2} and Lemma \ref{H2equidistribution} we have that 
    $$\lambda(I^{(m)}\cap \phi_{-t}(R))=\lambda(\phi_t(I^{(m)})\cap \pi_x^{-1}(b(R_1)))\frac{\textbf{card}(H_t^{ns}(I^{(m)},I^{(n_1)},h(R)))}{\textbf{card}((H_t^{ns}(I^{(m)},I^{(n_1)},[0,1]))}+$$
    $$\lambda(\phi_t(I^{(m)})\cap \pi_x^{-1}(b(R_2)))\frac{\textbf{card}(H_t^{ns}(I^{(m)},I^{(n_2)},h(R)))}{\textbf{card}((H_t^{ns}(I^{(m)},I^{(n_2)},[0,1]))}\to \lambda(I^{(m)})\lambda(b(R_1))\lambda(h(R_1))+$$
    $$\lambda(I^{(m)})\lambda(b(R_2))\frac{\lambda(h(R_2))}{q}>\lambda(I^{(m)})\mu(R).$$
\end{enumerate}

Observe that in all three cases,
$$\lim_{t\to\infty}\lambda(I^{(m)}\cap \phi_{-t}(R))\geq\lambda(I^{(m)})\mu(R).$$

In particular, for $\epsilon>0$ there exists a $t_0$ such that for any $t\geq t_0$ we have that
$$\lambda(I^{(m)}\cap \phi_{-t}(R))\geq (1-\epsilon)\lambda(I^{(m)})\mu(R).$$

To conclude the proof, we state that the $t_0$ constructed above will be useful, and the covers that serve for this purpose will consist of the intervals for the $m$-step configuration of the staircase, where the $m$ is the one that is useful for the $\delta$. With this, and in virtue of Lemma \ref{LemaUlci}, we conclude the proof.

\end{proof}

 \section{General Mixing Staircase} \label{gen}
The strategy for demonstrating that the flow constructed under a $2$-wise constant function, where its two heights are rationally independent values, over any mixing staircase transformation is analogous to the approach used for the classical staircase. In this section, we will outline the minor adjustments required from the previous procedure and refer to the proofs there. \\

We will once more rely on Lemma  \ref{LemaUlci}. The choice of partitions will be based on the natural partitions within the interval defined by the staircase transformation, with the specific partition determined by $\delta$ as per Lemma \ref{LemaUlci}, while the time $t_0$ will be contingent on $\epsilon$ and the rectangle.\\

In \cite{CS1} and \cite{CS2}, a sufficient condition is provided to characterize mixing staircase transformations and rank-one transformations.\\

Continuing with the previous procedure, our initial step is to demonstrate that:

\begin{lemma} \label{lemma5.1}
   There exists an $m_0$ such that for any $m \geq m_0$, for any level $I^{(m)}$ in the $m$-th step of the staircase configuration of any mixing staircase transformation $T$, and for any rectangle $R$ under the graph of $f$, the following holds. Let $\pi_x$ denote the horizontal projection, then:
   $$\lim_{t\to \infty} \lambda(\phi_t(I^{(n)})\cap \pi_x^{-1}(b(R)))=\lambda(I^{(n)})\lambda((b(R)).$$ \\
    
    Where $b(R)$ is the basis of the rectangle $R$.
\end{lemma}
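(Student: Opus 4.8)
The plan is to mirror the proof of Lemma~\ref{lemma4.2} essentially verbatim, since the only structural change from the classical staircase is that the cutting sequence $\{r_n\}$ is now arbitrary rather than $r_n = n$; the spacer rule $s_{n,i}=i-1$ defining a staircase transformation is unchanged, and $T$ is mixing by hypothesis. The proof again splits into two parts: first show that for $m$ large the flow does not collapse the horizontal projection of $I^{(m)}$, and then invoke the mixing of $T$ through a sandwich estimate.

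For the first part I would reproduce the separation argument. Let $N_0$ be the integer with $(N_0-1)q < 1 \leq N_0 q$. The key input is that the heights of the configuration form a strictly increasing sequence, which holds for \emph{every} staircase transformation since
\[
h_{n+1} = r_{n+1}h_n + \sum_{i=1}^{r_{n+1}} s_{n+1,i} > h_n .
\]
Hence there exists $m_0$ with $h_{m_0} \geq 4N_0$, and for any $m \geq m_0$ any two sublevels of $I^{(m)}$ appearing at a later step $m+n$ are separated by at least $h_m \geq 4N_0$ levels. As in Lemma~\ref{lemma4.2}, this separation exceeds the window in which two flowed pieces could have overlapping $x$-projections, so $\lambda(\pi_x(\phi_t(I^{(m)}))) = \lambda(I^{(m)})$ for all $t$. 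For the second part, I would note that for each $t$ there is a step $m+n$ at which $\phi_t(I^{(m)})$ decomposes as a disjoint union of pieces $T^{l_j}(I^{(m)}_j)$, where now $I^{(m)}$ is cut into $\prod_{j=m+1}^{m+n} r_j$ subintervals rather than $n!/m!$. Setting $m(t) = \min_j l_j$ and $M(t) = \max_j l_j$, one has $m(t)\to\infty$ as $t\to\infty$, and the same chain of inequalities
\begin{align*}
\bigl|\lambda(b(R)\cap T^{M(t)}(I^{(m)})) - \lambda(b(R))\lambda(I^{(m)})\bigr|
&\leq \bigl|\lambda(b(R)\cap \pi_x(\phi_t(I^{(m)}))) - \lambda(b(R))\lambda(I^{(m)})\bigr| \\
&\leq \bigl|\lambda(b(R)\cap T^{m(t)}(I^{(m)})) - \lambda(b(R))\lambda(I^{(m)})\bigr|
\end{align*}
holds. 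Since $T$ is mixing, both bounding terms tend to $0$, yielding
\[
\lambda\bigl(\phi_t(I^{(m)})\cap \pi_x^{-1}(b(R))\bigr) \longrightarrow \lambda(I^{(m)})\lambda(b(R)).
\]

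The only place where the argument uses more than a mechanical transcription of the classical proof is the non-overlap claim of the first part, and there the sole ingredient is the monotonicity of $h_n$, which is automatic. Consequently I do not expect a genuine obstacle; the delicate point is merely bookkeeping — verifying that the enumeration of the pieces $I^{(m)}_j$ and their exponents $l_j$ remains valid when $r_n \neq n$. Because the spacer count governing how many iterations of $\phi_q$ push a subinterval past a roof of height $1$ is still controlled by the same $N_0$ (the heights are $p=1$ and $q$ regardless of $\{r_n\}$), the counting argument is robust, and the mixing of $T$ supplied by the hypothesis carries the conclusion through exactly as in the classical case.
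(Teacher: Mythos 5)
Your proof mirrors the paper's argument for this lemma essentially step by step: the same separation claim (distinct sublevels of $I^{(m)}$ at any later step of the configuration sit at least $h_m$ tower-levels apart, and $h_m\to\infty$), followed by the same sandwich estimate between $T^{m(t)}(I^{(m)})$ and $T^{M(t)}(I^{(m)})$ that lets the mixing of $T$ give the conclusion. The only divergence is the separation threshold---the paper takes $h_{m_0}\geq 4r_1N_0$ to account for what it calls accumulations in the non-spacer blocks of $r_1$ intervals, while you keep $4N_0$---and this is harmless: two sublevels at tower-distance $d$ carry a fixed time-lag of at least $dq$ along the same trajectory, and simultaneous occupancy of a common level would require that lag to be smaller than the roof height, which is at most $1$, so any threshold of at least $N_0$ already rules out overlapping projections.
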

\begin{proof}
    The proof is essentially the same as in the case of the Classical staircase. \\

    Consider an interval $I^{(n)}$ such that for some subintervals $I^{(n+m)}_i, I^{(n+m)}_j$ such that for some non negative real numbers $t,s $$$\phi_{t-s}(I^{(n+m)}_i)= \phi_t(I^{(n+m)}_j).$$
    Consider $N_0$ such that $(N_0-1)q<1\leq N_0q$, just as in Lemma \ref{lemma4.2}  the only problem that can arise is when there are two subintervals in the $(m+n)-$th step of the staircase configuration such that their distance is smaller than $2r_1N_0$ (remember that $\{r_n\}$ is the cutting sequence for $T$). This is due to the fact that there can be accumulations in the $r_1$ intervals that are in the non-spacer blocks. Since $h_m$ is an increasing sequence, there exists an $m_0$ such that $h_m\geq 4r_1 N_0$ for any $m\geq m_0$. Therefore for any $n$ the  distance between any two sublevels $I^{(n+m)}_i$ and $I^{(n+m)}_j$ of $I^{(m)}$ in the $(m+n)$-th step of the staircase configuration is greater than $4r_1N_0$. The argument to conclude the proof is the same as in Lemma \ref{lemma4.2}.

\end{proof}

\begin{remark}
    Noticeably, for this proof, we only necessitate the divergence of the sequence $h_n$ associated with the rank-one transformation.  Therefore, assuming the transformation is mixing, the second requirement implies that as the flow approaches infinity, considering the transformation $T$ suffices instead. Since $T$ is mixing, we can attain the convergence outlined in Lemma \ref{lemma5.1}.
\end{remark}

Just as in the classical staircase case, any mixing staircase exhibits a staircase-like pattern, even if the cutting sequence is not given by \( r_n = n \). Consequently, multiple iterations of \( \phi_q \) applied to an interval \( I^{(m)} \) cause it to break into several subintervals. Due to the staircase-like structure of the spacer intervals, the resulting heights exhibit the same behavior as in the classical case.  

Another way to interpret this phenomenon is by considering the \((m+n)\)-th step of the staircase configuration. At this stage, multiple copies of the \( m \)-th step appear, separated by spacer intervals. These spacer intervals introduce \textit{delays} in the heights of the newly formed subintervals, meaning that the new heights differ by one, as discussed in the previous section. Our goal is to prove that these heights equidistribute.  

Following the notation in Definition \ref{Ht} and  Definition \ref{Hola} we want to prove that:

\begin{proposition} \label{propo5.4}
      Any subinterval $I^{(m)}$ is $H-$equidistributed.  
\end{proposition}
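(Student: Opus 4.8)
The plan is to reproduce the argument of Proposition \ref{propomiau} essentially verbatim, isolating the two features of the classical staircase that it genuinely used and checking that both survive for an arbitrary mixing staircase. The first feature is the internal staircase pattern of the spacers: at every step $n$ the subcolumn $C_{n,i}$ receives exactly $i-1$ spacers, each lying beneath $q$, and $C_{n,i+1}$ is stacked over $C_{n,i}$. This is part of the very definition of a staircase transformation and is completely independent of the cutting sequence $\{r_n\}$. Consequently the qualitative description of Section \ref{Behaviour}—that passing through a block of spacers preserves the height while each passage through the roof of height $1$ advances the height by one copy of $q \bmod 1$—holds unchanged, as already noted in Remark \ref{remark1}. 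The second feature is merely the irrationality of $q$, which is untouched. Thus the only thing that differs from the classical case is the bookkeeping governed by $\{r_n\}$: how many copies of the $m$-th step appear inside the $(m+n)$-th step, and the precise enumeration of spacer blocks. This affects constants, not the limiting distribution.

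First I would reduce, exactly as before, to tracking the leading interval $I^{(m)}_{1,\dots,1}$. By the observations of Section \ref{Behaviour} the set of heights attained by $\phi_{kq}(I^{(m)})$ is determined by the heights of $\phi_{kq}(I^{(m)}_{1,\dots,1}), \phi_{(k-1)q}(I^{(m)}_{1,\dots,1}),\dots$, and this reduction uses only the staircase pattern. For the non-spacer part every distinct height is produced by a passage through the roof of height $1$, so the heights attained in $I_{ns}$ are precisely an initial segment of $\{nq \bmod 1 : n \in \mathbb{N}\}$. Weyl's equidistribution theorem then yields the first limit required in Definition \ref{Hola}.

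For the spacer part I would repeat the first-return argument. By the analogue of Remark \ref{remark correspondence}, the heights appearing in spacer blocks are exactly the values $\phi_{n_l^s q}(I^{(m)}_{1,\dots,1})$ at those times $n_l^s$ for which the leading interval lands in a spacer; since the height is constant across a spacer block, these are the successive values of the orbit of $0$ under the first return map $g$ of the rotation $x \mapsto x + q \bmod 1$ to $[0,q)$. By the three-distance theorem $g$ is itself an irrational rotation of $[0,q)$, hence totally ergodic, so every power $g^k$ is uniquely ergodic and the orbit of $0$ equidistributes in $[0,q)$. For a fixed block size $n$ the relevant occurrence times form an arithmetic-type subsequence that is the $g^{a_n}$-orbit of $g^{b_n}(0)$ for suitable $a_n,b_n$ determined by $\{r_n\}$, and total ergodicity of $g$ makes $g^{a_n}$ uniquely ergodic, so the heights of size-$n$ blocks equidistribute in $[0,q)$. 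Summing over $n$ weighted by the probabilities $P_n$ of encountering a size-$n$ block gives $\sum_{n}\frac{b-a}{q}P_n=\frac{b-a}{q}$, which is the second limit. Finally, passing from the discretization $nq \to \infty$ to $t \to \infty$ introduces only a vanishing error, exactly as at the end of Proposition \ref{propomiau}.

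The main obstacle I anticipate is precisely this spacer case, and within it the claim that for each block size $n$ the set of times at which size-$n$ blocks occur is a single $g^{a_n}$-orbit. In the classical staircase these times admitted explicit closed forms (such as $1_j = 1+3j$ and $2_j = 2+3j$) built from $r_n = n$; for a general cutting sequence no such formula is available, so this step must be argued structurally. The key is to show that the self-similar way in which blocks of a given size reappear from one step of the configuration to the next forces their occurrence times into a single arithmetic progression relative to the passage pattern, so that the unique ergodicity of a power of the first-return rotation still applies and delivers the desired equidistribution.
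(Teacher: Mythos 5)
Your proposal follows essentially the same route as the paper's proof: reduce to the leading interval via the staircase pattern of \ref{Behaviour}, get the non-spacer case from equidistribution of $nq \bmod 1$, and handle the spacer case via the first-return rotation $g$ to $[0,q)$, a decomposition by block size, occurrence times along arithmetic progressions, unique ergodicity of powers of $g$, and the weighted sum $\sum_n P_n\frac{b-a}{q}=\frac{b-a}{q}$. The one step you flag as an obstacle is resolved in the paper exactly by the structural self-similarity you describe, made explicit through the formula $n_j = F(n) + j\,g(m(n))$ with $g(n)=\prod_{i=2}^{n} r_i$ (replacing the classical factorial expressions), together with the bookkeeping of the $r_1$ consecutive passages beneath the roof of height $1$, so your outline is missing no essential idea.
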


\begin{proof}
 Following the same reasoning as in the proof of Proposition \ref{propomiau}, we note that since \( T \) is a staircase transformation, the introduction of new spacers at each step of the staircase configuration results in the behavior described in \ref{Behaviour}. Specifically, after several iterations of \( \phi_q \), the heights attained by an interval \( I^{(m)} \) correspond to those attained by the lowest interval in its representation at the \( (m+n) \)-th step of the staircase configuration. Consequently, when the $k$ iterations of  $\phi_q$ goes to infinity the sequence of heights eventually approaches the set \( \{nq \mod 1\}, a_k\leq n\leq b_k \) where $b_k-a_k\to\infty$. Moreover, in the non-spacer part of the polygon, each height is attained by exactly one interval, which establishes \( H \)-equidistribution in the non-spacer case.\\

For the second part, following Proposition \ref{propomiau}, we construct a sequence \( a_n \) that describes the heights such that \( a_nq \mod 1 < q \). Since the non-spacer blocks contain \( r_1 \) intervals, moving from one spacer block to the next requires passing through the roof at \( 1 \) exactly \( r_1 \) consecutive times. Therefore, we can construct \( r_1 \) different sequences \( a_{in} \), each describing the height achieved after passing through the roof at \( 1 \) exactly \( n \leq r_1 \) times. In other words, for any of these \( r_1 \) sequences, \( a_{in}q\mod 1 \) describes the orbit of the \( n \)-th return map of an irrational rotation by \( q \) on the interval \( [0,q) \), which is known to be uniquely ergodic. This implies that the heights attained by the spacer blocks are described by the sequence \( a_{ir_1}q \mod 1 \), which equidistributes in \( [0,q) \).\\

Just as in the classical staircase case, if we enumerate the spacer blocks by counting the number of intervals they contain, we aim to describe, for each \( n \), a sequence \( n_i \) such that the \( n_k \)-th spacer block contains exactly \( n \) intervals.\\

For each \( n \), define the function
\[
g(n) = \prod_{i=2}^{n} r_i.
\]
Let \( n \) be any number, and consider \( m(n) \), the first step in the staircase configuration where a spacer block with \( n \) intervals appears. At step \( m(n) \), there will be \( m(n)-1 \) new values \( c_i \) corresponding to block sizes that were not present in the \( (m(n)-1) \)-th step. Ordering these from bottom to top, let \( F(n) \) denote the first occurrence of \( n \) in this list of blocks. If \( n \neq c_{m(n)-1} \), then  
\[
n_j = F(n) + j g(m(n)),
\]
and if \( n = c_{m(n)-1} \), then  
\[
n_j = F(n) + j g(m(n) + 1).
\]

In this setting, for every \( n \), the sequence \( a_{r_1n_j}q \mod 1 \) represents the orbit of \( g^{F(n)r_1}(0) \) under the transformation \( g^{r_1g(m(n)+1)} \) or \( g^{r_1g(m(n))} \), where \( g \) is the first return map of an irrational rotation by \( q \) on the subinterval \( [0,q) \). Consequently, the sequences \( a_{r_1n_j}q \mod 1 \) equidistribute in \( [0,q) \).\\

For the spacer part of the polygon, as observed in the previous paragraph, the sequences \( a_{r_1n_j}q \mod 1 \) describe the heights attained by spacer blocks containing \( n \) intervals. These sequences equidistribute in \( [0,q) \). It follows that if \( P_n \) denotes the probability of finding a spacer block with \( n \) intervals, then  
\[
\lim_{t\to\infty} \frac{\mathbf{card}(A_t^{s}(I^{(m)})\cap [a,b])}{\mathbf{card}(A_t^{s}(I^{(m)}))}=\sum_{n\in\mathbb{N}}\left(\frac{b-a}{q}\right)P_n=\frac{b-a}{q}.
\]

\end{proof}

As discussed in the previous section, it is not enough to consider $H$-equidistribution. 
\begin{lemma} \label{LemaHstrong}
    Each $I^{(m)}$ is $\overline{H}$-equidistributed.
\end{lemma}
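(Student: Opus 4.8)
The plan is to transcribe the proof of Lemma~\ref{H2equidistribution} from the classical case, substituting Proposition~\ref{propo5.4} for Proposition~\ref{propomiau} as the source of $H$-equidistribution and Lemma~\ref{lemma5.1} for Lemma~\ref{lemma4.2} as the source of horizontal mixing $\lambda(\phi_t(I^{(m)})\cap\pi_x^{-1}(b(R)))\to\lambda(I^{(m)})\lambda(b(R))$. The conceptual point of $\overline{H}$-equidistribution is that it refines $H$-equidistribution by demanding equidistribution of heights inside \emph{each} configuration cylinder $I^{(n)}$ rather than merely inside the whole non-spacer or spacer region; this is exactly what rules out the diagonal behavior warned about after Proposition~\ref{propomiau}. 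Fixing $I^{(n)}$ and assuming $m\geq n$, I would discretize the flow by iterates of $\phi_q$ and follow how $\phi_{kq}(I^{(m)})$ fragments as it climbs the staircase, recording which fragments land in the $I^{(n)}$-cylinder and over which subcylinders $I^{(n)}_{J}$ they sit.

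For the non-spacer case I would reproduce the leading-interval enumeration. Once $\phi_{Kq}$ has carried $I^{(m)}$ into the $(m+d)$-th step, only the subsubintervals lying over the subcylinders $I^{(n)}_{J'_i}$ matter; within each subcylinder one singles out the \emph{leading} subsubinterval of maximal height, relabels it $I^{(m)}_{J_i,1}$, and verifies the delay relation $\pi_y(\phi_{Kq}(I^{(m)}_{i,j}))=\pi_y(\phi_{(K-N_{1,j}+N_{1,i})q}(I^{(m)}_{1,1}))$, so that the heights realized inside the cylinder form blocks $kq,(k+1)q,\dots,(k+r)q\bmod 1$. As $K\to\infty$ these blocks lengthen; the equidistribution of $\{nq\bmod 1\}$ supplies the vertical distribution, while Lemma~\ref{lemma5.1} guarantees that the fragments also distribute correctly \emph{horizontally} among the subcylinders $I^{(n)}_{J}$. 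Combining the two gives equidistribution of heights in the $I^{(n)}$-cylinder.

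The spacer case runs in parallel, the only change being that the relevant heights no longer advance by a single $q\bmod 1$ step: they are indexed by the subsequence identified in Proposition~\ref{propo5.4}, where the sequences $a_{r_1 n_j}q\bmod 1$ were shown to trace orbits of an appropriate power of the first-return map of the rotation by $q$ on $[0,q)$, hence to equidistribute there. Feeding this subsequence into the same cylinder bookkeeping, and summing over block sizes weighted by the probabilities $P_n$, yields the ratio $\tfrac{b-a}{q}$ exactly as in Proposition~\ref{propo5.4}.

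The one genuine adjustment---and hence the place where care is needed---is the cutting value $r_1$. Here each non-spacer block of a column contains $r_1$ intervals rather than a single one, so an orbit must traverse the roof at height $1$ exactly $r_1$ consecutive times before reaching the next spacer block. This is precisely the phenomenon already absorbed into Lemma~\ref{lemma5.1}, whose separation bound is $4r_1N_0$ rather than $4N_0$, and into Proposition~\ref{propo5.4}, through the $r_1$ auxiliary sequences $a_{in}$. The hard part will therefore be purely combinatorial: checking that the leading-interval enumeration in each non-spacer subcylinder survives this $r_1$-fold bookkeeping, i.e.\ that distinct fragments entering a common subcylinder still receive distinct, consecutively-delayed heights. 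Once this is confirmed, every limit computed above transfers verbatim and no new analytic estimate is required.
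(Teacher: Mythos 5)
Your proposal follows essentially the same route as the paper: the paper's own proof of Lemma~\ref{LemaHstrong} simply observes that the staircase-like behavior of \ref{Behaviour} persists for general staircases and invokes the argument of Lemma~\ref{H2equidistribution} with Lemma~\ref{lemma5.1} and Proposition~\ref{propo5.4} substituted for their classical counterparts, which is exactly your plan. Your additional attention to the $r_1$-fold traversal of the roof at height $1$ is a correct identification of the only real adjustment, and it is precisely the point the paper has already absorbed into Lemma~\ref{lemma5.1} and Proposition~\ref{propo5.4}.
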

\begin{proof}
    Observe that the staircase-like behavior described in \ref{Behaviour} and \ref{H2equidistribution} also holds for a flow built under any staircase transformation. Using the ideas from Lemma \ref{H2equidistribution}, the proof follows directly from Lemma \ref{lemma5.1}, Lemma \ref{H2equidistribution}, and Proposition \ref{propo5.4}.
\end{proof}
\begin{theorem} \label{generalcase}
    Let $T$ represent any mixing staircase transformation, and let $f$ be a $2$-wise constant function characterized by roof values $p$ and $q$ that are rationally independent. If all the spacers of the transformation $T$ are positioned beneath the roof with value $q$, then the flow constructed under $f$ respecting $T$ is  mixing.
\end{theorem}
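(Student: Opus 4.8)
The plan is to deduce the statement from Ulcigrai's criterion, Lemma~\ref{LemaUlci}, following verbatim the architecture of the proof of Theorem~\ref{teo4.14} but feeding in the general-staircase inputs of Lemmas~\ref{lemma5.1} and~\ref{LemaHstrong}. Fix $\epsilon,\delta>0$ and a rectangle $R$ under the graph of $f$. First I would take as the candidate partial partition $\eta(t)$ the family of levels $I^{(m)}$ of the $m$-th step of the staircase configuration, choosing $m$ large enough that simultaneously $\lambda(I^{(m)})<\delta$, $h_m\lambda(I^{(m)})>1-\delta$, and $m\ge m_0$, where $m_0$ is the threshold of Lemma~\ref{lemma5.1}. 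Since all these levels have equal length, the restricted-growth hypothesis gives $\lambda(I^{(m)})\to 0$, so $Mesh(\eta(t))\le\delta$; and the union of the levels exhausts $[0,1)$ up to measure $\delta$, so $\lambda(\eta(t))>1-\delta$. Crucially this partition depends only on $\delta$, whereas the time $t_0$ will depend on $\epsilon$ and $R$.

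The core of the argument is the second clause of Lemma~\ref{LemaUlci}. Assuming without loss that $b(R)$ is a single level $I^{(n)}$, I would split into the same three cases as in Theorem~\ref{teo4.14}: $R$ in the non-spacer part, $R$ in the spacer part, and $R$ straddling both. In each case one factors $\lambda(\phi_t(I^{(m)})\cap R)$ as a horizontal piece $\lambda(\phi_t(I^{(m)})\cap\pi_x^{-1}(b(R)))$ times the vertical ratio $\textbf{card}(H_t^{\bullet}(I^{(m)},I^{(n)},h(R)))/\textbf{card}(H_t^{\bullet}(I^{(m)},I^{(n)},[0,1]))$. Lemma~\ref{lemma5.1} drives the horizontal factor to $\lambda(I^{(m)})\lambda(b(R))$, while the $\overline{H}$-equidistribution of Lemma~\ref{LemaHstrong} drives the vertical ratio to $\lambda(h(R))$ in the non-spacer case and to $\lambda(h(R))/q$ in the spacer case. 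Since $1/q>1$, the spacer normalization turns the limit into the inequality $\lim_{t\to\infty}\lambda(I^{(m)}\cap\phi_{-t}(R))\ge\lambda(I^{(m)})\mu(R)$; for a straddling $R$ one adds the two contributions of $R_1$ (non-spacer) and $R_2$ (spacer) and uses $\lambda(h(R_2))/q\ge\lambda(h(R_2))$.

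Having the liminf inequality in all three cases, I would, for the fixed $\epsilon$, extract a uniform $t_0$ with $\lambda(I^{(m)}\cap\phi_{-t}(R))\ge(1-\epsilon)\mu(R)\lambda(I^{(m)})$ for all $t\ge t_0$ and all $I^{(m)}\in\eta(t)$; uniformity across the finitely many equal-length levels is what makes $t_0$ independent of the particular $I^{(m)}$. Invoking Lemma~\ref{LemaUlci} then yields mixing.

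I expect the only genuine difficulty to lie not in this assembly --- which is formally identical to the classical case --- but in the inputs, and it has already been absorbed into Lemma~\ref{LemaHstrong} and Proposition~\ref{propo5.4}. The delicate point is that for a general cutting sequence $\{r_n\}$ the spacer blocks no longer grow by a single interval per step as when $r_n=n$; instead, moving between consecutive spacer blocks forces the orbit through the roof of height $1$ exactly $r_1$ times, so the relevant height sequence is an orbit of a high iterate of the first-return map $g$ of rotation by $q$ to $[0,q)$. Because every iterate of $g$ is again a uniquely ergodic irrational rotation, these orbits equidistribute, and the blockwise repetitions average out against the probabilities $P_n$ to give $(b-a)/q$. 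Once this spacer equidistribution is secured and combined with the horizontal equidistribution coming from the mixing of $T$ (equivalently the divergence of $h_n$), the theorem follows exactly as above.
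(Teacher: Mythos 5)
Your proposal follows essentially the same route as the paper: the paper's proof of this theorem simply invokes Lemma~\ref{lemma5.1} and Lemma~\ref{LemaHstrong} and repeats the assembly of Theorem~\ref{teo4.14} (staircase-level partial partitions depending only on $\delta$, the three-case splitting of $R$ into spacer/non-spacer/straddling parts, and Ulcigrai's criterion Lemma~\ref{LemaUlci}), which is exactly what you do. Your write-up is in fact slightly more careful than the paper's --- e.g.\ you state the covering condition correctly as $h_m\lambda(I^{(m)})>1-\delta$, where the paper's Theorem~\ref{teo4.14} has an apparent sign typo --- but the argument is the same.
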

\begin{proof}
    Just like in the proof of Theorem \ref{teo4.14}, we make use of Lemma \ref{lemma5.1} and Lemma \ref{LemaHstrong} to prove that there exists $m_0$ such that for any $m\geq m_0$, for any interval $I^{(m)}$ in the $m$-step configuration of the staircase and for any rectangle $R$ $$\lim_{t\to\infty}\lambda(I^{(m)}\cap \phi_{-t}(R))\geq \lambda(I^{(m)})\lambda(b(R))\lambda(h(R))=\lambda(I^{(m)})\mu(R).$$

    Following the same idea as in Theorem \ref{teo4.14} we check that the conditions for Lemma \ref{LemaUlci} are fulfilled, then $\phi_t$ is mixing.
\end{proof}

\section{$n$-fold mixing property} \label{nfold}

In the first half of last century, Rokhlin defined the notion of being $n$-fold mixing \cite{Ro}:
\begin{definition}

We say that a measurable dynamical system $(X,T,\mu)$ is $n$-fold mixing if for every $A_1,...,A_n$ measurable sets it is true that $$\lim_{m_1,...,m_{n-1}\to\infty}\mu(A_1\cap T^{m_1}(A_2)....\cap T^{m_1+m_2+...+m_{n-1}}(A_n))=\prod_{i=1}^n\mu(A_i).$$
\end{definition}

Note that $1$-fold mixing is just the notion of being strongly mixing. Rokhlin \cite{Ro} asked if every mixing transformation is also $n$-fold mixing for any $n$. This question remains open.\\

 Ryzhikov \cite{Ryz} provided a result which partially answers affirmatively this question for mixing rank one transformations and rank one flows.\\

In the preliminaries we provided a definition of rank-one transformations. There is an alternative definition that follows from the Rokhlin column theorem for aperiodic \footnote{Where the set of periodic points has zero measure.} transformation $T$.

\begin{definition}
    Let $(X, T,\mu)$ be a measurable dynamical system. We say $T$ is of rank-one if there exists a sequence of partitions of $X$ given by

    $$\eta_i=\{A_i, T(A_i),...,T^{h_i-1}(A_i),X\setminus{\bigcup_{j=0}^{h_i}T^j(A_i)}\}$$
    Such that the measure of the elements of those partitions goes to zero uniformly as $i\to\infty$.
\end{definition}

By Rokhlin column theorem, every aperiodic transformation $T$ has a representation like that, which is called a Rokhlin column, and $h_i$ is the height of such column.\\

We can observe that because of the construction of staircase transformation, they are rank-one transformations.\\

There are some generalizations of such transformations to flows, we are going to use the definition used by Ryzhikov \cite{Ryz}.

\begin{definition}

A \textit{rank-one flow} \( (X, \mathcal{B}, \mu, \phi_t) \) is a measure-preserving flow on a probability space such that there exists a sequence of measurable sets \( A_n \) (the \textit{tower bases}), a  decreasing sequence of times \( t_n \) and an increasing sequence of \textit{heights} \( h_n \) such that:
\begin{enumerate}
    \item \( t_n \to 0 \) as \( n \to \infty \).\\
    \item The total height satisfies \( h_n t_n \to \infty \), meaning that the towers approximate arbitrarily large segments of the flow.\\
    
    \item The sets \( \{ \phi_{k t_n}(A_n) \}_{k=0}^{h_n-1} \) are essentially disjoint, meaning for every $k_1\neq k_2$
    \[
    \mu \left( \phi_{k_1t_n}A_n \cap \phi_{k_2 t_n}(A_n) \right) = 0, \quad \text{for all } k_1,k_2 = 0, \dots, h_n-1.
    \]\\
    \item The space \( X \) is well approximated in measure by the union of these towers, i.e.,
    \[
    \mu \left( X \setminus \bigcup_{k=0}^{h_n -1} \phi_{kt_n}(A_n) \right) \to 0 \quad \text{as } n \to \infty.
    \]\\
    
\end{enumerate}
\end{definition}

If $(X,\phi_t,\mu)$ is the flow considered in the previous two sections, since it is a suspension over a rank one transformation and the roof function is piecewise constant it is immediate that:

\begin{lemma}
   Let $T$ denote any  staircase transformation, and let $f$ be a roof function that is $2$-wise constant, characterized by two height values that are rationally independent. Consider $\phi_t$ as the flow constructed under $f$ for $T$. Then $\phi_t$ is a rank one flow. 
\end{lemma}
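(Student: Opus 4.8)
The plan is to read off the tower data $(A_n,t_n,h_n)$ required by the definition directly from the cutting-and-stacking construction of $T$. Recall that the $n$-th step of the staircase configuration is a Rokhlin column for $T$ with base $B_n$ (of length $\ell_n\to 0$), levels $L_j=T^j(B_n)$ for $0\le j\le H_n-1$, and that $\bigcup_j L_j$ exhausts $[0,1)$ up to a set whose measure—the total length of the spacers still to be added at later steps—tends to $0$. Because $f$ is $2$-wise constant and the spacer and non-spacer regions are each unions of whole levels of every column, $f$ is constant on each level $L_j$; hence the time $\tau_n=\sum_{j=0}^{H_n-1} f|_{L_j}$ needed for the flow to carry a base point all the way up the column is the \emph{same} for every $b\in B_n$, and the suspension region over the column, $R_n=\{\phi_s(b,0):b\in B_n,\ 0\le s<\tau_n\}$, is a genuine flow tower of uniform height.

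Next I would fix a decreasing sequence $t_n\searrow 0$ with $t_n<q$, set $A_n=B_n\times[0,t_n)\subset X$, and put $h_n=\floor{\tau_n/t_n}$. The key observation is that, since $\bigcup_{k=0}^{h_n-1}[kt_n,(k+1)t_n)=[0,h_n t_n)$, the union $\bigcup_{k=0}^{h_n-1}\phi_{kt_n}(A_n)$ is exactly the flow-out $\{\phi_s(b,0):b\in B_n,\ 0\le s<h_n t_n\}$ of the base for the single time interval $[0,h_n t_n)$, with $h_n t_n\le\tau_n$. Three of the four axioms are then immediate: $t_n\to0$ by choice; $h_n t_n\ge \tau_n-t_n\to\infty$ because $\tau_n\ge H_n q\to\infty$; and the covering condition holds since $R_n$ fills $X$ up to measure at most $p\cdot\lambda([0,1)\setminus\bigcup_j L_j)\to 0$, while the flow-out omits only the top strip $\{\phi_s(b,0):b\in B_n,\ h_n t_n\le s<\tau_n\}$, of measure at most $\ell_n t_n\to 0$.

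The one point that requires an argument is essential disjointness. For this I would check that the flow-out map $(b,s)\mapsto\phi_s(b,0)$ is essentially injective on $B_n\times[0,\tau_n)$: two distinct base points cannot collide inside the column because $T^j(b)\in L_j$ and $L_j\cap B_n=\emptyset$ for $0<j<H_n$, and $\tau_n$ is precisely the exit time of the column. Consequently the slabs $\phi_{kt_n}(A_n)$, which correspond to the pairwise-disjoint time windows $[kt_n,(k+1)t_n)$, are pairwise essentially disjoint, establishing the remaining axiom. I expect this injectivity to be the main (and only genuine) obstacle, precisely because the slabs are \emph{not} rectangles: at each level the flow crosses a roof of height $p$ or $q$, so $\phi_{kt_n}(A_n)$ gets ``bent'' whenever $kt_n$ lands near a roof value. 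The point to emphasize is that the rank-one flow definition asks only for disjointness and covering, never for the slabs to be rectangular, so this bending is harmless; in particular the rational independence of $p$ and $q$ plays no role in this lemma and is needed only for the mixing statements. Assembling the four verified axioms yields that $\phi_t$ is a rank-one flow.
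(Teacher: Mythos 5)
Your proposal is correct, and it is precisely the argument the paper leaves implicit: the paper offers no proof at all, merely asserting that the claim ``is immediate'' since $\phi_t$ is a suspension over a rank-one transformation with a piecewise constant roof. Your write-up supplies the standard details behind that assertion --- flow towers over the cutting-and-stacking columns, constancy of the return time $\tau_n$ because $f$ is constant on every level, and essential disjointness of the slabs via injectivity of the flow-out map --- and you are also right that the rational independence of $p$ and $q$ is irrelevant here and only matters for the mixing results.
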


In section 4 in \cite{Ryz}, Ryzhikov proved that 

\begin{theorem} \label{ryz}
    (Ryzhikov 1993 \cite{Ryz}).\\
    If $\phi_t$ is a mixing rank-one flow, then $\phi_t$ is $n$-fold mixing for any $n$.
\end{theorem}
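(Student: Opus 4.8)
The plan is to translate multiple mixing into a statement about weak limits of self-joinings and then to use the rigidity of the rank-one tower structure to pin those limits down.

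First I would reformulate the conclusion in the language of joinings. Fix measurable sets $A_1,\dots,A_n$ and, for each $N$, increments $m_1^{(N)},\dots,m_{n-1}^{(N)}\to\infty$; set $s_0^{(N)}=0$ and $s_k^{(N)}=m_1^{(N)}+\dots+m_k^{(N)}$. Let $\lambda_N$ be the $n$-fold self-joining given by the law of $(\phi_{s_0^{(N)}}x,\dots,\phi_{s_{n-1}^{(N)}}x)$ under $\mu$. Then $\lambda_N(A_1\times\dots\times A_n)$ is exactly the quantity appearing in the definition of $n$-fold mixing, so the theorem is equivalent to $\lambda_N\to\mu^{\otimes n}$ in the weak topology on joinings. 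Since the set of self-joinings is weak-$*$ compact, it suffices to show that every subsequential limit $\lambda$ equals $\mu^{\otimes n}$; any such $\lambda$ is automatically invariant under the diagonal flow $\phi_t\times\dots\times\phi_t$.

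Second I would show that every such limit $\lambda$ is \emph{pairwise independent}. For $i<j$ the relative shift $s_{j-1}^{(N)}-s_{i-1}^{(N)}=m_i^{(N)}+\dots+m_{j-1}^{(N)}\to\infty$, so the projection of $\lambda_N$ onto coordinates $(i,j)$ is the off-diagonal joining along $\phi_s$ with $s\to\infty$. By hypothesis $\phi_t$ is mixing, i.e.\ $2$-fold mixing, which says precisely that such off-diagonals converge to $\mu\otimes\mu$; hence every two-dimensional marginal of $\lambda$ is the product measure.

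The crux, and the step I expect to be the main obstacle, is to upgrade pairwise independence to full independence using the finite-rank hypothesis. Here one approximates arbitrary sets by unions of levels of the Rokhlin towers $\{\phi_{kt_n}(A_n)\}_{k=0}^{h_n-1}$ furnished by the definition of a rank-one flow, so that $\lambda$ is controlled by how the single column governing each coordinate may be positioned relative to the others. The combinatorial heart is a displacement (spacer-matching) argument—this is where Kalikow's $3$-fold computation must be carried out uniformly in the increments and promoted to arbitrary order—showing that any nontrivial correlation among a block of coordinates would force a graph-type rigidity relation between some pair of them, contradicting that the corresponding two-marginal is $\mu\otimes\mu$. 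An alternative route bypassing the explicit combinatorics is to observe that rank-one flows have singular maximal spectral type and to invoke Host's theorem that mixing with singular spectrum implies mixing of all orders.

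Finally, combining the pieces: every subsequential limit of $\lambda_N$ is pairwise independent by the second step and therefore, by the rank-one rigidity of the third step, equals $\mu^{\otimes n}$. Weak-$*$ compactness then forces $\lambda_N\to\mu^{\otimes n}$, so $\lambda_N(A_1\times\dots\times A_n)\to\prod_{i=1}^n\mu(A_i)$, which is exactly $n$-fold mixing.
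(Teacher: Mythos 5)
The paper does not actually prove this statement: it is quoted as Ryzhikov's theorem (Section~4 of \cite{Ryz}) and used as a black box, so there is no internal argument to compare against. Judged on its own terms, your proposal sets up the correct framework, and indeed the framework Ryzhikov himself uses: pass to the $n$-fold self-joinings $\lambda_N$, use weak-$*$ compactness, observe that $2$-fold mixing forces every subsequential limit to be pairwise independent, and then argue that for a rank-one flow a pairwise independent self-joining must be the product measure. Your first, second and fourth steps are correct and routine.

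The genuine gap is your third step, which you yourself flag as the crux: upgrading pairwise independence to full independence is not a technical lemma one can gesture at --- it \emph{is} the theorem (for $\mathbb{Z}$-actions this is Kalikow's theorem in the rank-one case, and for flows it is precisely what occupies Section~4 of \cite{Ryz}). Your proposal describes what such an argument should accomplish (``any nontrivial correlation would force a graph-type rigidity relation between some pair of coordinates'') but supplies no mechanism: the tower data $(A_n, t_n, h_n)$ of the rank-one structure is never actually used, no quantitative displacement estimate is formulated, and no induction scheme on the order $n$ is given, so as written the proof reduces the theorem to itself. The fallback route is also unsound: invoking Host's theorem requires knowing that rank-one flows have singular maximal spectral type, but this is not a theorem --- singularity of the spectrum of general rank-one systems is a well-known open problem (it is established only for special classes, e.g.\ Ornstein's constructions by Bourgain and certain staircase constructions by Klemes and Reinhold), and Host's theorem is stated for $\mathbb{Z}$-actions, so even granting singularity a flow version would need separate justification. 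To become a proof, the proposal must carry out the joining/displacement computation it currently only names.
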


In sight of Theorem \ref{generalcase} and Theorem \ref{ryz} it follows that:

\begin{corollary}
     Let $T$ denote any mixing staircase transformation, and let $f$ be a roof function that is $2$-wise constant, characterized by two height values that are rationally independent. Consider $\phi_t$ as the flow constructed under $f$ for $T$. Then, the flow $\phi_t$ is $n$-fold mixing for any $n$.
\end{corollary}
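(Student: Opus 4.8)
The plan is to obtain the corollary as a direct consequence of three facts that are all in place by this point: that $\phi_t$ is a rank-one flow, that $\phi_t$ is mixing, and Ryzhikov's theorem upgrading mixing to $n$-fold mixing for rank-one flows. No new estimates are needed; the entire task is to confirm that the hypotheses of Theorem \ref{ryz} are satisfied simultaneously by the flow under consideration.

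First I would record the rank-one structure. Since $T$ is a staircase transformation it is, in particular, rank-one, and its Rokhlin towers are exactly the columns produced by the cutting-and-stacking construction, whose level widths $\lambda(I^{(n)})$ tend to $0$. Because the roof $f$ is $2$-wise constant, it is constant along the levels of each such column, so the suspension inherits a flow-tower refining these columns. This is precisely the content of the lemma preceding Theorem \ref{ryz}, which I would invoke to conclude that $\phi_t$ is a rank-one flow; in the notation of the definition, the tower bases $A_n$ are thin flow-boxes over the combinatorial base, the times $t_n \to 0$, and the heights $h_n$ are chosen so that $h_n t_n \to \infty$ exhausts the area of $S$.

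Next, since $T$ is assumed \emph{mixing}, Theorem \ref{generalcase} gives that the suspension flow $\phi_t = ([0,1], T, f, \mu)$ is mixing (equivalently, $1$-fold mixing). Thus $\phi_t$ is a mixing rank-one flow, so Theorem \ref{ryz} applies and yields that $\phi_t$ is $n$-fold mixing for every $n$, which is exactly the statement of the corollary.

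I do not expect a genuine obstacle in this final step: all of the analytic work lies upstream, in Theorem \ref{generalcase} (the mixing property) and in Ryzhikov's theorem (the structural upgrade). The only point deserving explicit attention is that one and the same flow meets both hypotheses, i.e.\ that the rank-one representation furnished by the preceding lemma is compatible with the mixing obtained in Theorem \ref{generalcase}. Since the rank-one representation is intrinsic to the cutting-and-stacking construction of $\phi_t$, while the rational independence of $p$ and $q$ was needed only to secure mixing, both properties hold for the same flow, and the corollary follows at once.
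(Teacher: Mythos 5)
Your proposal is correct and follows exactly the paper's own route: establish that $\phi_t$ is a rank-one flow via the lemma preceding Theorem \ref{ryz}, invoke Theorem \ref{generalcase} for mixing, and then apply Ryzhikov's Theorem \ref{ryz} to upgrade to $n$-fold mixing. Your additional remark about checking that both hypotheses hold for the same flow is a harmless elaboration of what the paper treats as immediate.
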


\section{Conclusion}
Exploring the case of general rank-one mixing transformations would be a natural next step. Our results for mixing staircases suggest that, under additional suitable conditions on the rank-one construction, mixing in the associated suspension flows could potentially be established. Notably, the first part of our proof relies only on the mixing properties of $T$, which are well understood for rank-one transformations. However, the second part crucially depends on the staircase-like structure to obtain sequences of heights with controlled properties. Extending this approach to general mixing rank-one transformations would likely require further refinement and deeper investigation.
\\

{\bf Acknowledgement.} I sincerely thank my advisor, Anja Randecker, for her encouragement and meticulous reading of this work. I am also grateful to Rodrigo Treviño for bringing this problem to my attention.

\bibliographystyle{alpha}
\bibliography{bibliography}

\end{document}